\documentclass[10pt,reqno]{amsart}
\usepackage{amsmath,amsfonts,amssymb,mathabx}
\usepackage{amsthm}
\usepackage{amstext}
\usepackage[normalem]{ulem}
\usepackage{verbatim}
\usepackage{longtable}
\usepackage{float}
\usepackage{makecell}
\usepackage{boldline}
\usepackage[title]{appendix}
\usepackage{caption}
\usepackage{array, makecell}
\usepackage{nicefrac}
\usepackage{siunitx}
\usepackage{indentfirst}
\usepackage{csquotes}

\newcolumntype{C}[1]{>{\centering\arraybackslash$}p{#1}<{$}}
\usepackage{multicol}
\usepackage{soul}

\usepackage[ruled,vlined]{algorithm2e}
\makeatletter
\renewcommand{\algocf@name}{}     

\makeatother

\setcellgapes{3pt}

\usepackage{fancyhdr} 
\usepackage{graphicx}  
\usepackage{bbm}
\usepackage{geometry}
\usepackage{enumitem}
\usepackage{hyperref}
\hypersetup{
	colorlinks,
	linkcolor={blue},
	citecolor={red},
	urlcolor={blue}
}
\usepackage{cancel}
\makeatletter
\newcommand{\setword}[2]{ 
	\phantomsection
	#1\def\@currentlabel{\unexpanded{#1}}\label{#2} 
}
\makeatother
\usepackage[usenames,dvipsnames]{color}

\newcommand{\adc}[1]{\textcolor{blue}{#1}}

\renewcommand{\labelenumi}{(\roman{enumi})}

\newtheorem{thm}{Theorem}[section]

\newtheorem{cor}[thm]{Corollary}
\newtheorem{rem}[thm]{Remark}
\newtheorem{rems}[thm]{Remarks}

\newtheorem{dfs}[thm]{Definitions}
\newtheorem{ex}[thm]{Example}

\newtheorem{symbs}[thm]{Notations}

\newtheorem{sthm}{Theorem}[subsection]
\newtheorem{srem}[sthm]{Remark}

\newtheorem{sex}[sthm]{Example}
\newcommand{\1}{\mathbbm{1}}
\newcommand{\wh}{\widehat}
\newcommand{\wt}{\widetilde}

\newcommand{\vS}{\varSigma}
\newcommand{\vO}{\varOmega}

\newcommand{\vY}{\varUpsilon}

\newcommand{\uph}{\upharpoonright}

\newcommand{\s}{\sigma}
\newcommand{\sq}{\subseteq}

\newcommand{\B}{\mathfrak B}
\newcommand{\N}{\mathbb N}

\newcommand{\R}{\mathbb R}

\newcommand{\E}{\mathbb E}

\newcommand{\F}{\mathcal F}
\newcommand{\C}{\mathcal C}
\newcommand{\G}{\mathcal G}
\newcommand{\D}{\mathcal D}
\newcommand{\M}{\mathcal M}

\def\H{{\mathcal H}}
\def\L{{\mathcal L}}

\newcommand{\al}{\alpha}
\newcommand{\ga}{\gamma}
\newcommand{\be}{\beta}
\newcommand{\ka}{\kappa}

\DeclareSymbolFont{extraup}{U}{zavm}{m}{n}
\DeclareMathSymbol{\varheart}{\mathalpha}{extraup}{86}
\DeclareMathSymbol{\vardiamond}{\mathalpha}{extraup}{87}

\title[A characterization of ruin-inducing probability measures in a renewal risk model]{A characterization of ruin-inducing probability measures in a renewal risk model}
\subjclass[2020]{Primary 91G05; secondary  60G44, 60K05.} 
\keywords{Ruin probability, Heavy tailed distributions, Compound renewal process, Progressively equivalent measures,  Martingales}

\author[S.~M. Tzaninis]{Spyridon M. Tzaninis\textsuperscript{1}}
\address{\textsuperscript{1}Department of Statistics and Actuarial-Financial Mathematics\\ University of the Aegean\\ Karlovassi\\ GR-83 200 Samos\\ Greece}
\email{stzaninis@aegean.gr}
\thanks{}

\author[A. Bozikas]{Apostolos Bozikas\textsuperscript{2}}
\address{\textsuperscript{2}Department of Statistics and Insurance Science\\ University of Piraeus\\ 80 Karaoli and Dimitriou str.\\ 185 34 Piraeus\\ Greece} 
\email{bozikas@unipi.gr} 
\thanks{}

\date{\today}

\usepackage[most]{tcolorbox}

\newtcolorbox[auto counter]{myblock}[2][]{%
  breakable,
  enhanced,
  colback=white,
  colframe=black,
  fonttitle=\bfseries,
  coltitle=black,
  sharp corners,
  boxrule=0.5pt,
  attach title to upper,
  title={\underline{Algorithm. #2}}, 
  #1
}

\begin{document}
	
\begin{abstract}
In this work, we derive a complete characterization of all ruin-inducing probability measures that preserve the structure of a given compound renewal process in terms of suitable pairs of functions \((\ga,\delta)\). This result allows us to obtain an explicit representation of the infinite-time ruin probability as an expectation under any ruin-inducing probability measure. A key feature of our approach is that the construction of these measures does not rely on the existence of moment generating functions, and is therefore applicable to heavy-tailed claim size distributions. The proposed framework includes the classical  Esscher transform as a special case.
\end{abstract}
	
\maketitle
	
\section{Introduction and motivation}\label{intro}
Let $(\vO,\vS,P)$ be a probability space, and consider a (risk) reserve process $R^u{\,:=\,}\{R^u_t\}_{t\in\R_+}$ defined by 
\[
R^u_t:=u+c\cdot t-\sum_{n=1}^{N_t} X_n=:u+c\cdot t-S_t
\]	
for every $t{\,\in\,}\R_+$, where $u{\,\in\,}\R_+$ is the \textit{initial capital}, $c{\,\in\,}(0,\infty)$ is the \textit{premium rate}, $N{\,:=\,}\{N_t\}_{t\in\R_+}$ is a counting process, $X{\,:=\,}\{X_n\}_{n\in\N}$ is  the  claim size process, and $S{\,:=\,}\{S_t\}_{t\in\R_+}$ is the aggregate claims process induced by $N$ and $X$. The function $\psi{\,:\,}\R_+{\,\to\,} [0,1]$ defined by $\psi(u){\,:=\,}P\big(\{\inf_{t\in\R_+}R^u_t{\,<\,}0\}\big)$ for any $u{\,\in\R_+\,}$ is the  (infinite-time) ruin probability for $R^u$ with respect to $P$. Also, let $\tau_u{\,:=\,}\inf\{t{\,\in\,}\R_+ : R^u_t{\,<\,}0\}$ ($\inf\varnothing{\,=\,}\infty$) denote the ruin time.  
\smallskip
  
In the  Cram\'{e}r-Lundberg and Sparre Andersen risk models, explicit formulas for $\psi(u)$ are available only in special cases (e.g. if the claim sizes follow a phase-type distribution, see Asmussen \& Albrecher~\cite{asal}, Chapter~IX, for an overview). This motivates the use of simulation-based approaches. For the finite-time ruin probability
\[
\psi(u,t):=P\big(\{\inf_{0\leq s\leq t } R^u_s<0\}\big)=P\big(\{\tau_u\leq t\}\big) 
\]
for all $t,u{\,\in\,}\R_+$,  the indicator function $\1_{\{\tau_u\leq t\}}$ can be   simulated via \textit{crude Monte Carlo} (MC).  However, this method is not directly applicable to \(\psi(u)\), since the event  $\{\tau_u {\,<\,} \infty\}$ cannot be simulated within finite time (cf., e.g., Asmussen \& Albrecher~\cite{asal}, p.~462). This limitation reflects a deeper structural issue: one seeks a new probability measure  under which ruin occurs almost surely (a.s.), while also preserving the probabilistic structure of the model. Once such a probability measure is constructed, simulation techniques such as \textit{Importance Sampling} (IS) can be employed (see Juneja \& Shahabuddin~\cite{jus} for an overview) to estimate \(\psi(u)\). \smallskip 

The construction of such measures is usually done by the \textit{exponential change of measures} technique. Let $\mathcal G{\,:=\,}\{\mathcal G_t\}_{t\in\R_+}$ be a filtration for $\vS$ and let $Y{\,:=\,}\{Y_t\}_{t\in\R_+}$ be a stochastic process on $(\vO,\vS)$ adapted to $\G$.  If there exists a $(P,\mathcal G)$-martingale $Z{\,:=\,}\{Z_t\}_{t\in\R_+}$ with \(Z_t{\,>\,}0\) \(P\)-a.s. and $\E_P[Z_t]{\,=\,}1$ for all $t{\,\in\,}\R_+$, then this can be used  as a likelihood ratio  to define a new probability measure $Q$ on $\vS$  by 
\[
Q(A):=\E_P[\1_A\cdot Z_t]\,\,\text{ for every } A \in\mathcal G_t.
\] 
In problems involving change of measures, the process \(Y\) is usually Markovian under \(P\). In such cases one can construct a new probability measure  \(Q\) so that the process still has the Markov property under $Q$  (see Palmowski \& Rolski~\cite{paro}). However,  when $Y$ is not Markov under $P$, the situation becomes more delicate. Typically, one must first markovize  the process through a supplementary variable, an idea tracing back to Cox~\cite{cox}, and then apply either the theory of Piecewise Deterministic Markov Processes (PDMPs), see Davis~\cite{davis}, or  of Markov Additive Processes (MAPs), see Asmussen \& Kella~\cite{akel}, to construct  suitable exponential martingales.\smallskip 
	
In the Sparre Andersen risk model,  where $S$ is a $P$-compound renewal process (CRP), the reserve process $R^u$ fails in general to be Markov (except in the compound Poisson case). Thus, one must first markovize \(R^u\), either by introducing the backward recurrence time $J_t$ or the forward recurrence time $V_t$ (cf., e.g., Rolski et al.~\cite{rss}, Sections~11.5.2 and~11.5.3, respectively). Following this, one may construct an a.s. positive $(P,\F)$-martingale (see  e.g., Dassios \& Embrechts~\cite{daem}, Theorem~10) or an a.s. positive $(P,\H)$-martingale (see e.g., Dassios~\cite{das}, Theorem~1.1.3), where $\F$ is the natural filtration of $S$, and  $\H$ is the filtration generated by $S$ and $V{\,:=\,}\{V_t\}_{t\in\R_+}$. Note here that  there exist cases where $\H$ is preferred to the natural filtration  $\F$ e.g., for a delayed renewal process  (cf., e.g., Schmidli~\cite{scm}, pp.~178-179). Using either of these  martingales, one can  define a new probability measure $Q$ on $\vS$ so that  $S$ remains a $Q$-CRP and  ruin occurs $Q$-a.s.. This approach, which is closely related to exponential tilting and the Esscher transform, has been widely used  in the literature to tackle various ruin-related problems (see, for example, Constantinescu et al.~\cite{cdnp},  Dassios~\cite{das}, Dassios \& Embrechts \cite{daem},  Schmidli~\cite{scm95, scm10} and Tzaninis~\cite{t1}),  as it resolves the infinite time horizon problem, and allows us to express $\psi(u)$ as a quantity under the new measure via the equality
\[
\psi(u)=\E_P[\1_{\{\tau_u<\infty\}}]=\E_Q[1/M_{\tau_u}]=\E_Q[1/L_{\tau_u}]  \,\,\text{ for every } u\in\R_+.
\]   
However, this construction relies fundamentally on the existence of moment generating functions (mgf) and therefore excludes heavy-tailed claim size distributions.  \smallskip
   
In the heavy-tailed setting, the main approaches  for computing \(\psi(u)\) are based on   
the widely used \textit{Pollaczek-Khinchin} (PK) formula (cf., e.g., Rolski et al.~\cite{rss}, Sections 5.3.3 and 6.5.1, for more details).  In this context, simulation methods such as  IS  and conditional MC are typically employed (see, for example, Asmussen \& Kroese~\cite{askr06}, Asmussen et al.~\cite{askrru05}, Juneja \& Shahabuddin~\cite{jusha02}, and references therein).~Nevertheless, simulation methods based on the PK formula  are essentially restricted to the Cram\'er-Lundberg model, where the ladder height distribution admits an explicit form. 
For an approach based on the PK formula in a Sparre Andersen risk model, with completely monotone claim size distributions and interarrival time distributions having rational Laplace transform, we refer to Albrecher \& Vatamidou~\cite{alva19}.\smallskip 
	
The primary objective of this paper is  to overcome these limitations. To this end, we provide a complete characterization of all ruin-inducing probability measures $Q$ on $\vS$ that are progressively equivalent to $P$ on \(\H\) and preserve the structure of a given \(P\)-CRP. In particular, we obtain a martingale representation for $\psi(u)$ 
that allows us to move beyond the classical adjustment coefficient approach and to provide a rigorous and unified framework for simulating ruin probabilities in a Sparre Andersen risk model.\smallskip 

In Section~\ref{CRPPEM}, we first provide a characterization of all probability measures $Q$  that are progressively equivalent to \(P\) on $\H$ and preserve the structure of a given $P$-CRP (see Theorem~\ref{thm!}). This result offers an elementary method for constructing general exponential martingales without relying on the complex machinery of PDMP or MAP theory (see also  Corollary~\ref{fmar}).  The methodological contribution of Theorem~\ref{thm!} is conceptually distinct from the classical PDMP or MAP approaches. In generator-based methods, martingales are constructed via the infinitesimal generator, and the resulting claim size and interarrival time distributions under the new measure are determined implicitly through the associated operator equations. By contrast, Theorem~\ref{thm!} enables a ``reverse-engineering'' approach: a researcher or practitioner may prescribe the desired target distributions for claim sizes and interarrival times under the new measure $Q$, and the theorem identifies an essentially unique tilting pair $(\gamma,\delta)$ together with the corresponding martingale, which arises naturally as a Radon-Nikod\'{y}m derivative. This transparency not only simplifies the construction but also ensures that the resulting measure is structurally well-defined and immediately applicable. In Section~\ref{app}, we prove the desired characterization of ruin-inducing probability measures, see Theorem~\ref{gform}. This allows us to obtain a martingale representation for \(\psi(u)\) suitable for simulation purposes (see condition~\eqref{ruin} in Theorem~\ref{gform}). In Section~\ref{examples}, we present various theoretical and numerical examples including both light- and heavy-tailed distributions for the claim sizes and the interarrival times. Finally, Section~\ref{cr} concludes the paper.  
  
	
\section{A change of measures technique for compound renewal processes}\label{CRPPEM}
	
$\N$ and $\R$ stand for the natural and the real numbers, respectively, while $\N_0{\,:=\,}\N{\,\cup\,}\{0\}$ and $\R_+{\,:=\,}\{x \in\R : x\geq0\}$. For a map $f{\,:\,}D{\,\to\,} E$ and for a non-empty set $A{\,\subseteq\,}{D}$  write $f{\uph}{A}$ for the restriction of $f$ to $A$ and $\1_A$ for the indicator function of the set $A$. For every $D{\,\subseteq\,} \R$ denote by $\B(D)$ the $\sigma$-algebra on $D$ generated by the topology $\mathfrak{E}{\,\cap\,} D{\,:=\,}\{D{\,\cap\,} E {\,:\,} E{\,\in\,}\mathfrak E\}$, where $\mathfrak E$ is the Euclidean topology over $\R$. In particular, for all $-\infty{\,\leq\,}\al{\,<\,}\be{\,\leq\,}+\infty$, write $\B(\al,\be){\,:=\,}\B\big((\al,\be)\big)$ for simplicity.   
\smallskip
	
Let $(\vO,\vS,P)$ be a probability space. For the definitions of a \textit{counting} (or \textit{claim number}) \textit{process} $N:=\{N_t\}_{t\in\R_+}$, a \textit{(claim) interarrival process} $W{\,:=\,}\{W_n\}_{n\in\N}$ and a \textit{(claim) arrival process}  $T{\,:=\,}\{T_n\}_{n\in\N_0}$ see, e.g., Schmidt~\cite{Sc}, pp.~17 and~6-7, respectively.  Recall that if   $W$ is $P$-i.i.d., then the counting process $N$ is a  \textit{renewal process} under $P$  (written $P$-RP  for short). Note that if $N$ is a $P$-RP then $\mathbb E_P[N^r_t]{\,<\,}\infty$ for any $t{\,\in\,}\R_+$ and $r{\,>\,}0$ (cf., e.g., Gut~\cite{gut}, Chapter 2, Theorem~3.1(ii)); hence according to Schmidt~\cite{Sc}, Corollary 2.1.5, $N$ has zero probability of explosion, that is $P\big(\{\sup_{n\in\N} T_n{\,<\,}\infty\}\big){\,=\,}0$.  Furthermore, if $X{\,:=\,}\{X_n\}_{n\in\N}$ is another sequence of $P$-i.i.d positive real-valued random variables on $\vO$, called a \textit{claim size process} (cf., e.g., Schmidt~\cite{Sc}, p.~103), which is $P$-mutually independent of $N$, define the \textit{aggregate claims process} $S{\,:=\,}\{S_t\}_{t\in\R_+}$ by means of $S_t{\,:=\,}\sum_{n=1}^{N_t} X_n$ for any $t{\,\in\,}\R_+$ (cf., e.g., Schmidt~\cite{Sc},  p.~103). In particular, if $N$ is a $P$-RP, then the aggregate claims process is a  \textit{compound renewal process} under $P$ (written $P$-CRP for short). \smallskip
	
	\textit{In what follows, put $\F^W{\,:=\,}\{\F^W_n\}_{n\in\N}$, $\F^X{\,:=\,}\{\F^X_n\}_{n\in\N}$ and $\F{\,:=\,}\{\F_t\}_{t\in\R_+}$ for the natural filtrations of $W, X$ and $S$, respectively.}\smallskip
	
For the definition of a $(P, \mathcal{Z})$-martingale, where $\mathcal{Z}{\,:=\,}\{\mathcal{Z}_t\}_{t\in\R_+}$ is a filtration for $\vS$, we refer to Schmidt~\cite{Sc}, p.~25. A $(P, \mathcal{Z})$-martingale $\{Z_t\}_{t\in\R_+}$ is $P$-\textit{a.s. positive} if, $Z_t{\,>\,}0$ is $P$-a.s. for each $t\in\R_+$.  \smallskip
	
	For any $t{\,\in\,}\R_+$  denote by $J_t$ the \textit{backward recurrence time} and by $V_t$  the \textit{forward recurrence time}, i.e. $J_t{\,:=\,}t-T_{N_t}$ and $V_t{\,:=\,}T_{N_t+1} - t$, respectively. Furthermore, denote by $\F^V{\,:=\,}\{\F^V_t\}_{t\in\R_+}$ the natural filtration generated by $V{\,:=\,}\{V_t\}_{t\in\R_+}$, where $\F^V_t{\,:=\,}\sigma\Big(\bigcup_{u\leq t}\sigma(V_u)\Big)$ for any $t{\,\in\,}\R_+$, and put $\H{\,:=\,}\{\H_t\}_{t\in\R_+}$ for the progressive enlargement of the filtration $\F$ by $\F^V$, that is  $\H_t{\,:=\,}\sigma(\F_t\cup\F^V_t)$ for every $t{\,\in\,}\R_+$.  Clearly \(\H_\infty{\,:=\,} \sigma(\bigcup_{t\in\R_+}\H_t){\,=\,} \sigma(\bigcup_{t\in\R_+}\F_t) {\,=:\,} \F_\infty\). 
	
	\begin{dfs}\label{df0}
		\normalfont
		Let $Q$ be a probability measure on $(\vO,\vS)$.   \smallskip
		
		\noindent\textbf{(a)} If $\G{\,\subseteq\,}\vS$ is a $\s$-algebra of subsets of $\vO$, then  $Q$ is \textit{absolutely continuous}  with respect to $P$ on $\G$ (in symbols $Q{\uph}\G{\,\ll\,}P{\uph}\G$) if
		\[
		\forall\; A\in\G\quad  \Big( P(A)=0\Rightarrow Q(A)=0 \Big).
		\] 
		In addition, if $P{\uph}\G{\,\ll\,} Q{\uph}\G$, then $Q$ and $P$ are \textit{equivalent} on $\G$ (in symbols $Q{\uph}\G{\,\sim\,}P{\uph}\G$). For $\G{\,=\,}\vS$ simply write $Q{\,\ll\,} P$ and $Q{\sim}P$.  
		\smallskip
		
		\noindent  	\textbf{(b)} If $\{\G_t\}_{t\in\R_+}$ is a filtration for $\vS$, then $Q$ and $P$ are \textit{progressively equivalent} on $\{\G_t\}_{t\in\R_+}$, if $Q$ and $P$   are equivalent on each  $\G_t$ (i.e., $Q{\uph}\G_t{\,\sim\,} P{\uph}\G_t$). If $P$ and $Q$  are progressively equivalent with respect to $\{\G_t\}_{t\in\R_+}$, we write
		\[
		\frac{dQ}{dP}{\uph}\G_t:=\frac{d(Q{\uph}\G_t)}{d(P{\uph}\G_t)} \,\,\text{ for all }  t\in\R_+
		\]
		for simplicity.
	\end{dfs}
	
 For reasons of mathematical convenience, we  introduce a new basic probability space  $(\vO,\vS,P)$. To this purpose, put $(\vO,\vS){\,:=\,} \big((0,\infty)^\N{\,\times\,}(0,\infty)^\N,\B(\vO)\big)$ and let  $\mu$ and $\nu$ be probability measures on $\B(0,\infty)$. Define the probability measure $P{\,:=\,}\mu_\N{\,\otimes\,}\nu_\N$ on $\vS$, and put $W_n(\omega){\,:=\,}w_n$, $X_n(\omega){\,:=\,}x_n$ for every $\omega{\,=\,}(w_1,\ldots,w_n,\ldots ; x_1,\ldots, x_n,\ldots){\,\in\,}\vO$. We then get that $W{\,:=\,}\{W_n\}_{n\in\N}$ and $X{\,:=\,}\{X_n\}_{n\in\N}$ are two $P$-mutually independent sequences of $P$-i.i.d. positive real-valued random variables,   satisfying conditions $P_{W_1}{\,=\,}\mu$ and $P_{X_1}{\,=\,}\nu$.  Putting $T_n{\,:=\,}\sum_{k=1}^n W_k$ for any $n{\,\in\,}\N_0$ and $T{\,:=\,}\{T_n\}_{n\in\N_0}$,  we define the counting process $N{\,:=\,}\{N_t\}_{t\in\R_+}$ by means of $N_t{\,:=\,}\sum_{n=1}^\infty\1_{\{T_n\leq t\}}$ for any $t{\,\in\,}\R_+$  (cf., e.g., Schmidt~\cite{Sc}, Theorem~2.1.1). Setting now $S_t{\,:=\,}\sum^{N_t}_{n=1} X_n$ for any $t{\,\in\,}\R_+$ and $S{\,:=\,}\{S_t\}_{t\in\R_+}$, it follows that \(S\) is a \(P\)-CRP. Clearly,  $\vS{\,=\,}\F_\infty^{(W,X)}{\,=\,}\F_\infty{\,=\,}\H_\infty$. 
\smallskip	
	
In order to present the main result of this section  we have to introduce the following classes.

\begin{symbs}\label{symb2} 
\normalfont 
{\bf(a)}  The class of all real-valued $\B(0,\infty)$-measurable functions $\ga$ with \(\E_{P}\big[e^{\ga(X_{1})}\big]{\,=\,}1\) will be denoted by $\hypertarget{fp}{\F_{P}}{\,:=\,}\F_{P,X}$.  Furthermore, denote by  $\hypertarget{gp}{\G_{P}}{\,:=\,}\G_{P,W}$  the class consisting of all  real-valued $\B(0,\infty)$-measurable functions $\delta$  satisfying condition $\E_P\big[e^{\delta(W_1)}\big]{\,=\,}1$.\smallskip    
		
\noindent 	{\bf (b)}   The class of all probability measures $Q$ on $\vS$ so that \(S\) is  a \(Q\)-CRP and   $Q{\uph}\H_t{\,\sim\,} P{\uph}\H_t$ for all $t{\,\in\,}\R_+$, will be denoted by   $\hypertarget{ms}{\M^\H_S}{\,:=\,}\M^\H_{S,P,W,X}$.
	\end{symbs}	
	 
The next result, which improves Macheras \& Tzaninis~\cite{mt3}, Theorem~3.1, plays a key role in the proof of our main result, as it establishes a bijection  between the elements of  $\hyperlink{ms}{\M^\H_S}$ and  $\hyperlink{fp}{\F_P}{\,\times\,}\hyperlink{gp}{\G_P}$. For its formulation,  for every  pair $(\ga,\delta){\,\in\,}\hyperlink{fp}{\F_P}{\,\times\,}\hyperlink{gp}{\G_P}$  define  $S_t^{(\ga)}{\,:=\,}\sum_{j=1}^{N_t}\ga(X_j)$ and $T_{N_t}^{(\delta)}{\,:=\,}\sum_{j=1}^{N_t}\delta(W_j)$  for all $t{\,\in\,}\R_+$.

\begin{thm}\label{thm!}  
The following assertions hold:
\begin{enumerate}
\item
for every  $Q{\,\in\,}\hyperlink{ms}{\M^\H_S}$ there exists an essentially unique pair $(\ga,\delta){\,\in\,}\hyperlink{fp}{\F_P}{\,\times\,}\hyperlink{gp}{\G_P}$ so that the conditions 
\begin{gather}
\tag{$RM(\H)$}
\frac{dQ}{dP}{\uph}\H_t=e^{S_t^{(\ga)}+T_{N_t+1}^{(\delta)}}=:L^{(\ga,\delta)}_{t}\quad P{\uph}\H_t\text{-a.s.} 
\label{mart}
\end{gather}
and 
\begin{gather}
\tag{$RM(\F)$}
\frac{dQ}{dP}{\uph}\F_t=e^{S_t^{(\ga)}+T_{N_t}^{(\delta)}}\cdot\frac{\int_{J_t}^\infty e^{\delta(w)}\,P_{W_1}(dw)}{\int_{J_t}^\infty \,P_{W_1}(dw)}=:M^{(\ga,\delta)}_{t}\quad P{\uph}\F_t\text{-a.s.}
\label{mart1}
\end{gather}
hold for all $t{\,\in\,}\R_+$, and  the processes $L^{(\ga,\delta)}{\,:=\,}\{L^{(\ga,\delta)}_{t}\}_{t\in\R_+}$ and $M^{(\ga,\delta)}{\,:=\,}\{M^{(\ga,\delta)}_{t}\}_{t\in\R_+}$ are  a.s. positive $(P,\H)$- and $(P,\F)$-martingales, respectively, satisfying $\E_P\big[L^{(\ga,\delta)}_{t}\big]{\,=\,}\E_P\big[M^{(\ga,\delta)}_{t}\big]{\,=\,}1$; 
\item
conversely, for every pair $(\ga,\delta){\,\in\,}\hyperlink{fp}{\F_P}{\,\times\,}\hyperlink{gp}{\G_P}$ there exists a unique probability measure $Q{\,:=\,}Q^{(\ga,\delta)}{\,\in\,}\hyperlink{ms}{\M^\H_S}$ determined by 
\begin{gather}		
Q(A)=\E_P\big[\1_A\cdot L^{(\ga,\delta)}_{t}\big]\,\,\text{ for every } t \in \R_+\text{ and } A \in \H_t
\label{mq}
\tag{$\ast$}
\end{gather}
and satisfying condition \eqref{mart1}.
\end{enumerate}
\end{thm}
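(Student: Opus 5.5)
The plan is to work on the canonical space, where $\vS=\F^{(W,X)}_\infty$, and reduce everything to the structure of $\H_t$. The first step is to show that $\H_t$ coincides with the $\sigma$-algebra of events $A$ such that, for each $n\in\N_0$, $A\cap\{N_t=n\}=\{N_t=n\}\cap B_n$ with $B_n\in\sigma(W_1,\dots,W_{n+1},X_1,\dots,X_n)$. Indeed, $V_t$ reveals $T_{N_t+1}$, while $S$ reveals the jump times $T_1,\dots,T_{N_t}$ and the sizes $X_1,\dots,X_{N_t}$ (the $X_n$ are positive, so every claim produces a visible jump). Likewise, $\F_t$ is generated on $\{N_t=n\}$ by $(W_1,\dots,W_n,X_1,\dots,X_n)$ intersected with $\{W_{n+1}>t-T_n\}$.

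\textbf{Converse (ii).} Fix $(\ga,\delta)\in\F_P\times\G_P$. Define probability measures $\nu^\ga(dx):=e^{\ga(x)}\nu(dx)$ and $\mu^\delta(dw):=e^{\delta(w)}\mu(dw)$; these are probabilities because $\ga\in\F_P$ and $\delta\in\G_P$. Put $Q:=\mu^\delta_\N\otimes\nu^\ga_\N$. Then $S$ is a $Q$-CRP, and $Q$ is determined uniquely by its values on $\bigcup_t\H_t$, since this is a $\pi$-system generating $\vS$. On $A\cap\{N_t=n\}$ with $B_n$ as above, both $Q$ and $\1_{\{N_t=n\}}\1_{B_n}$ depend only on $(W_1,\dots,W_{n+1},X_1,\dots,X_n)$. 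Comparing finite-dimensional densities gives
\[
Q(A\cap\{N_t=n\})=\E_P\Big[\1_{A\cap\{N_t=n\}}\,e^{\sum_{j\le n}\ga(X_j)+\sum_{j\le n+1}\delta(W_j)}\Big].
\]
Summing over $n$ yields \eqref{mq} and hence \eqref{mart}. The martingale property of $L^{(\ga,\delta)}$ and $\E_P[L^{(\ga,\delta)}_t]=1$ then follow from consistency of \eqref{mq} across $t$. Since $L^{(\ga,\delta)}_t>0$ everywhere, $Q\uph\H_t\sim P\uph\H_t$. For \eqref{mart1}, condition $L^{(\ga,\delta)}_t$ on $\F_t$. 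On $\{N_t=n\}$ the only non-$\F_t$ factor is $e^{\delta(W_{n+1})}$. Given $\F_t$, $W_{n+1}$ is independent of the past and conditioned to exceed $J_t$. This gives $M^{(\ga,\delta)}_t=\E_P[L^{(\ga,\delta)}_t\mid\F_t]$, which is then a positive $(P,\F)$-martingale equal to $dQ/dP\uph\F_t$.

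\textbf{Direct part (i).} Let $Q\in\M^\H_S$. Since $S$ is a $Q$-CRP, under $Q$ the $W_n$ are i.i.d.\ with some law $\mu'$, the $X_n$ are i.i.d.\ with some law $\nu'$, and the two sequences are independent. The key step is to show $\mu'\sim\mu$ and $\nu'\sim\nu$. For $E\in\B(0,\infty)$ with $\nu(E)=0$, consider the event $\{X_1\in E,\,N_t\ge1\}\in\H_t$. It is $P$-null, so it is $Q$-null for every $t$. Letting $t\to\infty$ and using $Q(N_t\ge1)\to1$ gives $\nu'(E)=0$; the reverse direction is symmetric. For the $W$'s, the event $\{W_1\in E\}$ is in $\H_0$, because $V_0=W_1$, so the same argument applies directly. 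This is exactly where the enlargement by $\F^V$ is essential. Now set $\ga:=\log(d\nu'/d\nu)$ and $\delta:=\log(d\mu'/d\mu)$, modified on null sets to be real-valued. Then $(\ga,\delta)\in\F_P\times\G_P$ and $Q=\mu'_\N\otimes\nu'_\N=Q^{(\ga,\delta)}$, so part (ii) supplies \eqref{mart}, \eqref{mart1} and the martingale claims.

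\textbf{Uniqueness.} Essential uniqueness of $(\ga,\delta)$ follows from uniqueness of Radon--Nikod\'ym derivatives. For example, \eqref{mart} at $t=0$ gives $dQ/dP\uph\H_0=e^{\delta(W_1)}$, which determines $\delta$ $\mu$-a.e.; restricting to $\{N_t=1\}$ for large $t$ then determines $\ga$ $\nu$-a.e. The main obstacle is the careful identification of $\H_t$ and $\F_t$ on $\{N_t=n\}$ in the first step, and in particular the conditional computation behind \eqref{mart1}; the rest is bookkeeping with product measures.
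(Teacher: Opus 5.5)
Your proof is correct, and its core matches the paper's proof. Both rest on the trace identity $\H_t\cap\{N_t=n\}=\sigma(W_1,\dots,W_{n+1},X_1,\dots,X_n)\cap\{N_t=n\}$ (the paper's step (d)), the product density on that $\sigma$-algebra, summation over $n$, and conditioning on $\{W_{N_t+1}>J_t\}$ to obtain \eqref{mart1}. What differs is the direction of reduction. The paper proves (i) directly: it cites Macheras \& Tzaninis, Proposition~2.1, for $Q_{W_1}\sim P_{W_1}$ and $Q_{X_1}\sim P_{X_1}$, and builds the density on $\G_{n,n+1}$ via Dynkin's lemma and $Q$-independence (steps (a)--(c)). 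It then gets (ii) by rerunning step (e) for the product measure $\wt Q$. You prove (ii) first and deduce (i) from $Q=\mu'_{\N}\otimes\nu'_{\N}=Q^{(\ga,\delta)}$, which makes (a)--(c) unnecessary, and you prove the marginal equivalence yourself rather than citing it. Your version is shorter and self-contained. The paper's direct construction is what Remark~\ref{remp}(d) relies on to carry (i) over to arbitrary probability spaces, although your reduction also transfers there via image measures under $(W,X)$, since $\H_t\subseteq\sigma(W,X)$.

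Two small corrections. First, the enlargement by $\F^V$ is not essential for $\mu'\sim\mu$. The event $\{W_1\in E\}\cap\{N_t\ge 1\}$ already lies in $\F_t$, so your $t\to\infty$ argument for $X_1$ works verbatim for $W_1$. This agrees with Remark~\ref{remp}(c), where the paper shows that $\M^\H_S$ and $\M_S$ coincide. Second, in the uniqueness step, $P(\{N_t=1\})$ can vanish for all large $t$ (e.g.\ if $\mu$ has bounded support). You should instead take any $t$ with $P(\{N_t=1\})>0$; one exists because $\int_0^\infty P(\{N_t=1\})\,dt=\E_P[W_2]>0$. Alternatively, simply note that $e^{\ga}$ must equal $dQ_{X_1}/dP_{X_1}$.
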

	
\begin{proof}
Fix on arbitrary $t{\,\in\,}\R_+$ and $n{\,\in\,}\N_0$.\medskip 
		
\noindent Ad (i): Let $Q{\,\in\,}\hyperlink{ms}{\M^\H_S}$. First note that since $\mathcal{F}_t{\,\subseteq\,}\H_t$, we easily get $Q{\uph}\F_t{\,\sim\,}P{\uph}\F_t$, which, together with Macheras \& Tzaninis~\cite{mt3}, Proposition~2.1(i)${\,\Rightarrow\,}$(ii), implies that $Q_{W_1}{\,\sim\,}P_{W_1}$ and $Q_{X_1}{\,\sim\,}P_{X_1}$. The rest of the proof is divided into the following six steps. \smallskip
		
\noindent {\bf (a)} There exists a $P_{W_1}$-a.s. unique  function $\delta{\,\in\,}\hyperlink{gp}{\G_P}$, defined by means of $\delta{\,:=\,}\ln g$, where $g$ is a  $P_{W_1}$-a.s. positive Radon-Nikod\'{y}m derivative  of $Q_{W_1}$ with respect to $P_{W_1}$, satisfying the condition 
\begin{gather}
Q(E)=\E_{P}\big[\1_E\cdot e^{\sum_{j=1}^{n+1}\delta(W_j)}\big]\quad\text{ for any }  E\in\F^W_{n+1}.
\label{rnw}
\end{gather}
		
In fact, since $P_{W_1}{\,\sim\,} Q_{W_1}$,  it follows by the Radon-Nikod\'{y}m Theorem that there exists a $P_{W_1}$-a.s. unique positive Radon-Nikod\'{y}m derivative $g$ of $Q_{W_1}$ with respect to $P_{W_1}$. Put $\delta{\,:=\,}\ln g$. It then follows that $\delta{\,\in\,}\hyperlink{gp}{\G_P}$.  To check the validity of condition \eqref{rnw}, consider the family of sets  ${\C}^W_{n+1}{\,:=\,}\left\{\bigcap^{n+1}_{j=1} C_j {\,:\,} C_{j}{\,\in\,}\sigma(W_{j})\right\}$ which is  a generator of $\mathcal{F}_{n+1}^W$, closed under finite intersections. An easy computation shows that any $C{\,\in\,}\C^W_{n+1}$ satisfies condition \eqref{rnw}. The latter, together with Dynkin's Lemma, proves (a). \smallskip
		
\noindent {\bf (b)} There exists a $P_{X_1}$-a.s. unique  function $\ga{\,\in\,}\hyperlink{fp}{\F_P}$, defined by means of $\ga{\,:=\,}\ln f$, where $f$ is a  $P_{X_1}$-a.s. positive Radon-Nikod\'{y}m derivative  of $Q_{X_1}$ with respect to $P_{X_1}$, satisfying the condition 
\begin{gather}
Q(F)=\E_{P}\big[\1_F\cdot e^{\sum_{j=1}^n \ga(X_j)}\big]\quad\text{ for any }   F\in\F^X_n.
\label{d1}
\end{gather}
		
In fact, since $P_{X_1}{\,\sim\,}{Q}_{X_1}$, then similarly to (a)  there exists a $P_{X_1}$-a.s. unique positive Radon-Nikod\'{y}m derivative $f$ of $Q_{X_1}$ with respect to $P_{X_1}$. Putting now $\ga{\,:=\,}\ln f$, we immediately get $\ga{\,\in\,}\hyperlink{fp}{\F_P}$. The remainder of the proof of (b) follows by arguments similar to those used in (a).  \smallskip
		
\noindent {\bf (c)} Condition    
\begin{gather}
Q(G)=\E_P\big[\1_G\cdot e^{\sum_{j=1}^n \ga(X_j)+\sum_{j=1}^{n+1} \delta(W_j)}\big] \quad \text{ for any } G\in\G_{n,n+1}
\label{d2}
\end{gather}
holds true, where $\G_{n,n+1}{\,:=\,}\sigma(\F^X_n\cup\F^W_{n+1})$.\smallskip
		
		In fact, consider the family  
		\[
		\C_{n,n+1}:=\Bigg\{\bigg(\bigcap_{j=1}^{n}  X_j^{-1}[F_j]\bigg)\cap\bigg(\bigcap_{j=1}^{n+1} W_{j}^{-1}[E_{j}]\bigg)   : F_j, E_j\in\B(0,\infty)\Bigg\}
		\]
		and let $G{\,\in\,}\C_{n,n+1}$. Since $X$ and $W$ are $Q$-mutually independent, we may apply  (a) and (b) to get 
		\[
		Q(G)=\E_P\big[\1_G\cdot e^{\sum_{j=1}^n \ga(X_j)+\sum_{j=1}^{n+1} \delta(W_j)}\big].
		\]
		Denote by  $\D_{n,n+1}$ the family of all $G{\,\in\,}\G_{n,n+1}$  satisfying condition \eqref{d2}. Clearly, $\D_{n,n+1}$ is a Dynkin class.  Since $\C_{n,n+1}$ is a   generator of $\G_{n,n+1}$ which is closed under finite intersections and  $\C_{n,n+1}{\,\subseteq\,} \D_{n,n+1}$, we can apply Dynkin's Lemma  to  complete the proof of (c).\smallskip
		
		\noindent \textbf{(d)}  The equality $\H_t{\,\cap\,}\{N_t{\,=\,}n\}{\,=\,}\G_{n,n+1}{\,\cap\,}\{N_t=n\}$ holds true. \smallskip  
		
		In fact, since the inclusion $\G_{n,n+1}{\,\cap\,}\{N_t=n\}{\,\sq\,}\H_t{\,\cap\,}\{N_t{\,=\,}n\}$  is immediate,  it suffices to prove only the inverse inclusion.  To this end,   let $A{\,\in\,}\bigcup_{0\leq{u}\leq{t}}\s(V_u)$ be arbitrary. There exists an index $u{\,\in\,}[0,t]$ and a set $B{\,\in\,}\B(0,\infty)$ such that  
		\[
		A\cap\{N_t=n\} =\bigg[\bigcup_{k=0}^\infty\Big(\{N_u=k\}\cap  \big(T_{k+1}-u\big)^{-1}[B]\Big)\bigg]\cap\{N_t=n\} = D_{n,u}\cap\{N_t=n\},
		\]
		where 
		\[
		D_{n,u}:=\bigcup_{k=0}^n\Big(\{N_u=k\}\cap \big(T_{k+1}-u\big)^{-1}[B]\Big)\in\F^W_{n+1}\sq\G_{n,n+1};
		\]
		hence	$\Big(\bigcup_{0\leq u\leq{t}}\s(V_u)\Big){\,\cap\,}\{N_t{\,=\,}n\}{\,\sq\,}\G_{n,n+1}{\,\cap\,}\{N_t=n\}$. This, together with condition 
		\[
		\F_t\cap\{N_t=n\}=\G_{n,n}\cap\{N_t=n\}\subseteq\G_{n,n+1}\cap\{N_t=n\},
		\]
		where the equality follows by e.g. Jacod~\cite{jacod}, Proposition~3.39(a),   completes the proof of (d).\smallskip
		
		\noindent {\bf (e)} Condition \eqref{mart} holds true and the family $L^{(\ga,\delta)}$ is an a.s. positive $(P,\H)$-martingale satisfying   $\E_P\big[L_t^{(\ga,\delta)}\big]{\,=\,}1$.  \smallskip
		
		In fact, let  $A{\,\in\,}\H_t$ be given.  Since by step (d) there exists a set $B_{n,n+1}{\,\in\,}\G_{n,n+1}$ such that 
		\[
		A\cap\{N_t=n\}=B_{n,n+1}\cap\{N_t=n\},
		\]
		it follows that
		\[
		Q(A)=\sum_{k\in\N_0} Q(A\cap\{N_t=k\})
		=\sum_{k\in\N_0} Q(B_{k,k+1}\cap  \{ N_t =k\}),
		\]
		where the first equality follows since $N$ has zero probability of explosion. But since  $B_{n,n+1}{\,\cap\,}\{N_t{\,=\,}n\}{\,\in\,}\G_{n,n+1}$, one gets by step (c) that 
		\[
		Q(A)=\sum_{k\in\N_0}\E_{P}\big[\1_{B_{k,k+1}\cap  \{ N_t =k\}}\cdot e^{\sum_{j=1}^k \ga(X_j)+\sum_{j=1}^{k+1} \delta(W_j)}\big] =\E_{P}\big[\1_A\cdot e^{S_t^{(\ga)}+T^{(\delta)}_{N_t+1}}\big];
		\]
		hence
		\begin{gather}
			Q(A)=\E_{P}\big[\1_A\cdot L^{(\ga,\delta)}_{t}\big]\,\text{ for all } A\in\H_{t},
			\label{21}
		\end{gather}
		where $L^{(\ga,\delta)}_{t}{\,:=\,}e^{S_t^{(\ga)}+T^{(\delta)}_{N_t+1}}$. Since condition \eqref{21} holds true for any $s{\,\in\,}[0,t]$ and $A{\,\in\,}\H_s$,  it follows that  $L^{(\ga,\delta)}{\,:=\,}\{L^{(\ga,\delta)}_s\}_{s\in\R_+}$  is a $(P,\H)$-martingale.  The latter along with condition~\eqref{21} proves~\eqref{mart}. Condition~\eqref{mart} for $A{\,=\,}\vO$ yields $\E_{P}\big[L^{(\ga,\delta)}_t\big]{\,=\,}Q(\vO){\,=\,}1$. Finally, since $e^\delta$ and $e^\ga$ are $P_{W_1}$- and $P_{X_1}$-a.s. positive functions, respectively, it follows that $P\big(\big\{L^{(\ga,\delta)}_t{\,>\,}0\big\}\big){\,=\,}1$.\smallskip
		
		\noindent {\bf (f)} Condition~\eqref{mart1} holds true and the family $M^{(\ga,\delta)}$ is an a.s. positive $(P,\F)$-martingale satisfying condition $\E_P\big[M_t^{(\ga,\delta)}\big]=1$.  \smallskip
		
		The proof follows from e.g. Jacod~\cite{jacod}, Proposition~3.39(a), together with arguments similar to those used in (e). This completes the proof of  (f), and hence of statement (i).\medskip 
		
\noindent Ad (ii): Let \(Q\) be a set-function defined by condition \eqref{mq}. For a given pair $(\ga,\delta){\,\in\,}\hyperlink{fp}{\F_P}{\,\times\,}\hyperlink{gp}{\G_P}$,   consider the set-functions  $\wt \mu, \wt \nu {\,:\,}\B(0,\infty){\,\to\,} \R$  defined by 
\[
\wt\mu(B){\,:=\,}\E_{P}\big[\1_{W^{-1}_{1}[B]}\cdot e^{\delta(W_1)}\,\big]\,\,\text{ and }\,\, \wt \nu(E){\,:=\,}\E_{P}\big[\1_{X^{-1}_{1}[E]}\cdot e^{\ga(X_1)}\big]
\]
for all $B, E{\,\in\,}\B(0,\infty)$. Clearly $\wt \mu$ and $\wt\nu$ are probability measures on $\B(0,\infty)$. Thus, we may define a probability measure $\wt Q{\,:=\,}\wt\mu_\N{\,\otimes\,}\wt\nu_\N$ on $\vS$ such that  $S$ is a $\wt{Q}$-CRP  with $\wt Q_{W_1}{\,=\,}\wt\mu$ and $\wt Q_{X_1}{\,=\,}\wt\nu$. Since $\wt Q_{W_1}{\,\sim\,}P_{W_1}$ and $\wt Q_{X_1}{\,\sim\,} P_{X_1}$, the arguments used in the first part of the proof of step~(e) yield
\[
\wt Q(A)=\E_{P}\big[\1_A\cdot L^{(\ga,\delta)}_{t}\big]\,\text{ for every } A\in\H_{t}.
\]
Consequently, $\wt Q{\uph}\H_t{\,=\,}Q{\uph}\H_t$, and therefore $\wt Q{\uph}\wt\vS{\,=\,}Q{\uph}\wt\vS$, where $\wt\vS{\,:=\,}\bigcup_{s\in\R_+}\H_s$. Thus,  $Q$ is $\sigma$-additive on $\wt\vS$, and  $\wt Q$ is the unique extension of $Q$ on $\vS{\,=\,}\sigma(\wt\vS)$.  In particular,  $Q{\,\in\,}\hyperlink{ms}{\M^\H_S}$, and  statement~(ii) follows.  
	\end{proof}

\begin{rems}
\label{remp}
\normalfont
\textbf{(a)} Let $(\ga,\delta){\,\in\,}\hyperlink{fp}{\F_P}{\,\times\,}\hyperlink{gp}{\G_P}$. An immediate consequence of condition \eqref{mq}  is that the  distributions of the claim sizes and claim interarrival times under $Q{\,\in\,}\hyperlink{ms}{\M^\H_S}$ are given by  
\[
Q_{X_1}(B)=\E_P\big[\1_{X_1^{-1}[B]}\cdot e^{\ga(X_1)}\big]\,\,\text { and }\,\, Q_{W_1}(E)=\E_P\big[\1_{W_1^{-1}[E]}\cdot e^{\delta(W_1)}\big]
\]
for any $B, E{\,\in\,}\B(0,\infty)$. Moreover, these conditions imply that $e^\ga$ and  $e^\delta$ are the \(P_{X_1}\)- and \(P_{W_1}\)-a.s. unique positive Radon-Nikod\'{y}m derivatives of $Q_{X_1}$ with respect to $P_{X_1}$ and of  $Q_{W_1}$ with respect to $P_{W_1}$, respectively.\smallskip 

\noindent \textbf{(b)}  Let $(\ga,\delta){\,\in\,}\hyperlink{fp}{\F_P}{\,\times\,}\hyperlink{gp}{\G_P}$. If $\ga{\,\neq\,} 0$ or $\delta{\,\neq\,}0$, then, according to Macheras \& Tzaninis~\cite{mt3}, Remark~3.1, the probability measure $Q$ constructed in Theorem~\ref{thm!}(ii) is singular to  $P$.  \smallskip 
 
\noindent \textbf{(c)} Let us  denote by \(\hypertarget{msF}{\M_S}\) the class of all probability measures \(Q\) on \(\vS\) that are progressively equivalent to \(P\) on \(\F\) and such that \(S\) is a \(Q\)-CRP. This is precisely the class of probability measures studied in Macheras \& Tzaninis~\cite{mt3}.  We now show that the classes $\hyperlink{ms}{\M^\H_S}$ and $\hyperlink{msF}{\M_S}$ coincide.\smallskip

Fix an arbitrary \(t{\,\in\,}\R_+\). Since \(\F_t{\,\subseteq\,}\H_t\), it follows that for every \(Q{\,\in\,}\hyperlink{ms}{\M^\H_S}\) 
\[
Q{\uph}\H_t\sim P{\uph}\H_t \Rightarrow Q{\uph}\F_t\sim P{\uph}\F_t,
\]
which implies that  $\hyperlink{ms}{\M^\H_S}{\,\subseteq\,} \hyperlink{msF}{\M_S}$.\smallskip 

Conversely,  let $Q{\,\in\,}\hyperlink{msF}{\M_S}$. By~\cite{mt3}, Proposition~2.1(i)$\Rightarrow$(ii), we have $Q_{X_1}{\,\sim\,} P_{X_1}$ and $Q_{W_1}{\,\sim\,}P_{W_1}$.  By the arguments used in the proof of Theorem~\ref{thm!}, we obtain
\[
\frac{dQ}{dP}{\uph}\H_t=\Big[\prod_{j=1}^{N_t}\frac{dQ_{X_1}}{dP_{X_1}}(X_j)\Big]\cdot \Big[\prod_{j=1}^{N_t+1} \frac{dQ_{W_1}}{dP_{W_1}}(W_j)\Big]\quad P{\uph}\H_t\text{-a.s..}
\]
and 
\[
P\Big(\Big\{\frac{dQ}{dP}{\uph}\H_t>0\Big\}\Big)=1.
\]
Therefore $Q{\uph}\H_t{\,\sim\,} P{\uph}\H_t$, implying that $Q{\,\in\,}\hyperlink{ms}{\M^\H_S}$; hence  \(\hyperlink{msF}{\M_S}{\,\subseteq\,} \hyperlink{ms}{\M^\H_S}\), and the two classes coincide.\smallskip

\textit{Consequently, we shall henceforth write  \(\hyperlink{ms}{\M_S}\) in place of \(\hyperlink{ms}{\M^\H_S}\) for simplicity, whenever no confusion arises.}\smallskip 
		
\noindent \textbf{(d)} We note that the conclusions of Theorem~\ref{thm!}(i) continue to hold if the probability space $(\vO,\vS,P)$ constructed above is replaced by an arbitrary probability space $(\vY,T,\mu)$. Moreover, Theorem~\ref{thm!} remains valid if the classes \(\hyperlink{ms}{\M_S}\), \(\hyperlink{fp}{\F_{P}}\) and  \(\hyperlink{gp}{\G_{P}}\) are replaced by their subclasses
\[
\hypertarget{ms1}{\M^1_S}:=\big\{Q\in\hyperlink{ms}{\M_S} : \E_Q[X_1]<\infty \text{ and } \E_Q[W_1]<\infty\big\},
\]
\[
\hypertarget{fp1}{\F^1_{P}}:=\big\{\ga\in \hyperlink{fp}{\F_{P}} : \E_{P}\big[X_1\cdot e^{\ga(X_{1})}\big]{\,<\,}\infty\big\},
\]
and 
\[
\hypertarget{gp1}{\G^1_{P}}:=\big\{\delta\in\hyperlink{gp}{\G_{P}} : \E_{P}\big[W_1\cdot e^{\delta(W_{1})}\big]{\,<\,}\infty\big\},
\]
respectively.  \smallskip

\noindent \textbf{(e)} Recall that  \(N\) is a \textit{delayed} (or \textit{modified}) renewal process under $P$ (\(P\)-DRP for short), if the sequence $W$ of the interarrival times is $P$-independent and the sequence $\{W_n\}_{n\geq 2}$ is $P$-identically distributed (cf., e.g., Cox~\cite{coxr}, p.~28). To avoid possible confusion, we use the notation $V_0$ for the first interarrival time $W_1$ (recall that $V_0{\,=\,}T_{N_0+1}-0{\,=\,}T_1{\,=\,}W_1$). The characterization given in Theorem~\ref{thm!} extends naturally to the class of DRPs. Note, however, that the corresponding \((P,\F)\)-martingale takes a more complicated  form.\smallskip 

Fix an arbitrary \(t{\,\in\,}\R_+\) and let \(S\) be a \(P\)-compound DRP.  If \(Q\) is a probability measure on \(\vS\) such that \(Q{\uph}\H_t{\,\sim\,}P{\uph}\H_t\) and  \(S\) is a \(Q\)-compound DRP, then by applying arguments similar to those used in the proof of Theorem~\ref{thm!}(i), there exists an essentially unique triplet \((\ga,\delta_1,\delta_2)\) of \(\B(0,\infty)\)-measurable functions, satisfying \(\ga{\,\in\,}\hyperlink{fp}{\F_P}\) and the conditions
\begin{gather}
\E_P\big[e^{\delta_1(V_0)}\big]=\E_P\big[e^{\delta_2(W_2)}\big]=1
\label{drp}
\end{gather}
such that 
\begin{gather*}
\frac{dQ}{dP}{\uph}\H_t=e^{S_t^{(\ga)}+\delta_1(V_0)+\sum_{k=2}^{N_t+1}\delta_2(W_k)}\quad P{\uph}\H_t\text{-a.s.} 
\label{dmart}
\end{gather*}
and 
\begin{gather}
\frac{dQ}{dP}{\uph}\F_t=\1_{\{N_t=0\}}\cdot\frac{\int_{t}^\infty e^{\delta_1(v)}\,P_{V_0}(dv)}{\int_{t}^\infty \,P_{V_0}(dv)}+\1_{\{N_t\geq1\}}\cdot e^{S_t^{(\ga)}+\delta_1(V_0)+\sum_{k=2}^{N_t}\delta_2(W_k)}\cdot\frac{\int_{J_t}^\infty e^{\delta_2(w)}\,P_{W_2}(dw)}{\int_{J_t}^\infty \,P_{W_2}(dw)} \quad P{\uph}\F_t\text{-a.s.}.
\label{dmart1}
\end{gather}

Conversely, by the arguments used in the proof of Theorem~\ref{thm!}(ii),  for every triplet \((\ga,\delta_1,\delta_2)\) of \(\B(0,\infty)\)-measurable functions satisfying  \(\ga{\,\in\,}\hyperlink{fp}{\F_P}\) and \eqref{drp}, condition
\[
Q(A)=\E_P\Big[\1_A\cdot e^{S_t^{(\ga)}+\delta_1(V_0)+\sum_{k=2}^{N_t+1}\delta_2(W_k)}\Big]\,\,\text{ for every }  A \in \H_t
\]
determines a unique probability measure \(Q\) on \(\vS\) such that \(Q{\uph}\H_t{\,\sim\,}P{\uph}\H_t\), \(S\) is a \(Q\)-compound DRP and condition \eqref{dmart1} holds.
\end{rems}

In the next example, we show how starting from a \(P\)-CRP, we can choose a suitable pair of functions \((\ga,\delta)\) leading to the construction of a measure \(Q{\,\in\,}\hyperlink{ms}{\M_S}\) such that the aggregate claims process \(S\) becomes a \(Q\)-CRP with prescribed claim size and interarrival time distributions. In order to present it, denote by \(R_Z\) the \textit{range} of a real-valued random variable \(Z\).
 
\begin{ex}\normalfont 
Assume that \(R_{X_1}{\,=\,}R_{W_1}{\,=\,}(0,\infty)\) and that \(P_{X_1}, P_{W_1}{\,\ll\,}\lambda\), where  $\lambda$  denotes the  Lebesgue measure on \(\B(0,\infty)\). Denote by \(f_{X_1}\) and \(f_{W_1}\) the corresponding density functions. We aim to construct a measure \(Q\) under which both the claim sizes and interarrival times are exponentially distributed with unit rate.\smallskip 

 Consider the pair \((\ga,\delta)\) of real-valued \(\B(0,\infty)\)-measurable functions defined by 
\[
\ga(x):=\ln\frac{e^{-x}}{f_{X_1}(x)}=-x-\ln f_{X_1}(x)\,\,\text{ for all } x\in(0,\infty),
\]
and 
\[
\delta(w):=\ln\frac{e^{-w}}{f_{W_1}(w)}=-w-\ln f_{W_1}(w)\,\,\text{ for every } w\in(0,\infty),
\]
respectively. Then 
\[
\E_P\big[e^{\ga(X_1)}\big]=\int_0^\infty \frac{e^{-x}}{f_{X_1}(x)}\,P_{X_1}(dx)=\int_0^\infty e^{-x}\,\lambda(dx)=1,
\]
and similarly \(\E_P[e^{\delta(W_1)}]{\,=\,}1\); hence $(\ga,\delta){\,\in\,}\hyperlink{fp}{\F_P}{\,\times\,}\hyperlink{gp}{\G_P}$.   Thus, we may apply  Theorem~\ref{thm!}(ii) to obtain  a unique probability measure $Q{\,:=\,}Q^{(\ga,\delta)}{\,\in\,}\hyperlink{ms}{\M_S}$ determined by 
\[ 
Q(A)=\E_P\bigg[\1_A\cdot \Big(\prod_{j=1}^{N_t}\frac{e^{-X_j}}{f_{X_1}(X_j)}\Big)\cdot\Big(\prod_{j=1}^{N_t+1}\frac{e^{-W_j}}{f_{W_1}(W_j)}\Big)\bigg] \,\,\text{ for every } t \in \R_+\text{ and } A \in \H_t.
\]
By Remark~\ref{remp}(a),  the claim size and interarrival times distribution under \(Q\) are 
\[
Q_{X_1}(B)=\E_P\big[\1_{X_1^{-1}[B]}\cdot e^{\ga(X_1)}\big]=\int_B e^{-x}\,\lambda(dx)\,\,\text{ for all } B\in\B(0,\infty),
\]
and 
\[
Q_{W_1}(E)=\E_P\big[\1_{W_1^{-1}[E]}\cdot e^{\delta(W_1)}\big]=\int_E e^{-w}\,\lambda(dw)\,\,\text{ for every } E\in\B(0,\infty),
\]
respectively; hence, \(Q_{W_1}{\,=\,}Q_{X_1}{\,=\,}{\bf Exp}(1)\),  and thus \(S\) is a \(Q\)-compound Poisson process (CPP).\smallskip 

More generally, if \(R_{X_1}, R_{W_1}{\,\subseteq\,}(0,\infty)\), then replacing \(e^{-x}\) and \(e^{-w}\) by arbitrary target density functions \(g\) and \(h\) on \(R_{X_1}\) and \(R_{W_1}\), respectively,  yields a transformation toward any prescribed absolutely continuous distribution. Finally, note that one can work analogously in the discrete case, by replacing densities with probability mass functions on the corresponding supports. 
\end{ex}

The most common change of measures techniques in the Sparre Andersen risk model involve the markovization (either backward or forward) of the reserve process \(R^u\) together with the use of the complex machinery of PDMP  theory (see Dassios \& Embrechts~\cite{daem}, Section~2.3, and Dassios~\cite{das}, Section~3.5, respectively).  A first consequence of Theorem~\ref{thm!} is Corollary~\ref{fmar}, which provides an elementary and unifying construction of the aforementioned martingales. By deriving these martingales directly as Radon-Nikod\'{y}m derivatives, we establish a framework that bypasses the analytical complexity of generator-based methods, see also Proposition 4.15 in Tzaninis \& Macheras~\cite{mt2}.\smallskip

For a random variable $Z$ on $(\vO,\vS)$, denote by $M_{P_Z}$ the  mgf  of the distribution $P_Z$ (cf., e.g., Schmidt~\cite{Sc}, p.~174). Whenever the underlying probability measure is clear from the context, we simply write $M_Z$ instead of $M_{P_Z}$. \smallskip

Let \(c{\,\in\,}\R_+\) and put \(r_{X_1}{\,:=\,}\sup\big\{r{\,\in\,}\R_+ {\,:\,} M_{X_1}(r){\,<\,}\infty\big\}\). Assume that \(r_{X_1}{\,>\,}0\). For every \(r{\,\in\,}[0,r_{X_1})\), let \(\theta(r)\) be the unique solution to the equation
\begin{gather}
	M_{X_1}(r)\cdot M_{W_1}\big(-\theta(r)-c\cdot r\big)=1,
	\label{ad}
\end{gather}
cf., e.g., Rolski et al~\cite{rss}, Lemma~11.5.1(i). Put \(y(r){\,:=\,}\theta(r)+c \cdot r\) for all \(r{\,\in\,}[0,r_{X_1})\).

\begin{cor}
\label{fmar}
Let \(u{\,\in\,}\R_+\) and \(r{\,\in\,}[0,r_{X_1})\), where \(r_{X_1}{\,>\,}0\). The following assertions hold:
\begin{enumerate}
\item for every $Q^{(r)}{\,\in\,}\hyperlink{ms}{\M_S}$ with  	
\[
Q_{X_1}^{(r)}(F):=\frac{\int_F e^{r\cdot x}\, P_{X_1}(dx)}{M_{X_1}(r)}\,\, \text{ and } \,\, Q_{W_1}^{(r)}(E):=\frac{\int_E e^{-y(r)\cdot w }\, P_{W_1}(dw)}{M_{W_1}\big(-y(r)\big)}
\]
for any $F,E{\,\in\,}\B(0,\infty)$, there exists an essentially unique pair $(\ga_r,\delta_r){\,\in\,}\hyperlink{fp}{\F_P}{\,\times\,}\hyperlink{gp}{\G_P}$ with  
\[
\gamma_r(x)=r\cdot x-\ln M_{X_1}(r)\,\,\text{ for all } x\in(0,\infty)
\] 
and  
\[
\delta_r(w)=-y(r)\cdot w-\ln M_{W_1}\big(-y(r)\big)\,\text{ for each } w\in(0,\infty),
\]
such that the conditions 
\begin{gather*}
\frac{dQ^{(r)}}{dP}{\uph}\H_t=M_{X_1}(r)\cdot e^{-r\cdot(R^u_t-u) -y(r)\cdot V_{t} -\theta(r)\cdot t}=:L_t^{(r)}\quad P{\uph}\H_t\text{-a.s.} 
\label{sch1}
\end{gather*}
and 
\begin{gather}
\frac{dQ^{(r)}}{dP}{\uph}\F_t=M_{X_1}(r)\cdot   e^{-r\cdot(R^u_t-u) +y(r)\cdot J_t -\theta(r)\cdot t} \cdot \frac{\int^{\infty}_{J_t} e^{-y(r)\cdot w} \, P_{W_1}(dw)}{{\int_{J_t}^\infty \,P_{W_1}(dw)}}
\quad P{\uph}\F_t\text{-a.s.} 
\label{sch2}
\end{gather}
hold for all $t{\,\in\,}\R_+$;
\item 
conversely, for any pair $(\ga_r,\delta_r)$ of real-valued $\B(0,\infty)$-measurable functions as in (i), there exists a unique probability measure $Q^{(r)}{\,\in\,}\hyperlink{ms}{\M_S}$ determined by \eqref{mq}, namely  
\[
Q^{(r)}(A)=\E_P\big[\1_A\cdot L_t^{(r)}\big]\,\,\text{ for all } t\in\R_+\text{ and } A\in\H_t, 
\]
and satisfying condition \eqref{sch2}.
\end{enumerate}
\end{cor}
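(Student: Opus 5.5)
The corollary is a direct specialization of Theorem~\ref{thm!} to an exponential tilting pair. I would first check that $(\ga_r,\delta_r)$ lies in $\F_P\times\G_P$, then read off the Radon--Nikod\'ym derivatives from \eqref{mart} and \eqref{mart1}, and finally rewrite them algebraically in terms of $R^u_t$, $V_t$, $J_t$ and $t$.

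\emph{Membership.} Since $r<r_{X_1}$, we have $M_{X_1}(r)<\infty$. By \eqref{ad}, $M_{W_1}(-y(r))=1/M_{X_1}(r)$, which is finite and positive, so both functions are real-valued and Borel. We then get $\E_P[e^{\ga_r(X_1)}]=M_{X_1}(r)/M_{X_1}(r)=1$, and likewise for $\delta_r$. The given $Q^{(r)}_{X_1}$ and $Q^{(r)}_{W_1}$ have densities $e^{\ga_r}$ and $e^{\delta_r}$ with respect to $P_{X_1}$ and $P_{W_1}$.

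\emph{Part (i).} Take $Q^{(r)}\in\M_S$. By Theorem~\ref{thm!}(i) there is an essentially unique pair $(\ga,\delta)$, and by Remark~\ref{remp}(a) $e^\ga$ and $e^\delta$ are the a.s. unique densities of $Q^{(r)}_{X_1}$ and $Q^{(r)}_{W_1}$. Hence $(\ga,\delta)=(\ga_r,\delta_r)$ a.s.

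\emph{Part (ii).} This is Theorem~\ref{thm!}(ii) applied to $(\ga_r,\delta_r)$.

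It remains to identify $L^{(\ga_r,\delta_r)}_t$ with $L^{(r)}_t$ and $M^{(\ga_r,\delta_r)}_t$ with the right-hand side of \eqref{sch2}.

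\emph{The $\H$-density.} We have
\[
S^{(\ga_r)}_t+T^{(\delta_r)}_{N_t+1}=rS_t-N_t\ln M_{X_1}(r)-y(r)T_{N_t+1}-(N_t+1)\ln M_{W_1}(-y(r)).
\]
Using $\ln M_{W_1}(-y(r))=-\ln M_{X_1}(r)$, the $N_t$ terms cancel and a single $+\ln M_{X_1}(r)$ remains. Next, write $T_{N_t+1}=t+V_t$ and $rS_t=-r(R^u_t-u)+crt$. Since $y(r)=\theta(r)+cr$, the $t$-terms combine as $crt-y(r)t=-\theta(r)t$. This gives $L^{(r)}_t$.

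\emph{The $\F$-density.} Here we use $T^{(\delta_r)}_{N_t}$ together with $T_{N_t}=t-J_t$. The same cancellation leaves a factor $M_{X_1}(r)^{N_t}\cdot M_{X_1}(r)^{-N_t}=1$ from the exponent. The ratio of integrals in \eqref{mart1} contributes $e^{-\ln M_{W_1}(-y(r))}=M_{X_1}(r)$ times $\int_{J_t}^\infty e^{-y(r)w}P_{W_1}(dw)\big/\int_{J_t}^\infty P_{W_1}(dw)$. Collecting terms yields \eqref{sch2}.

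The main point needing care is this bookkeeping of the normalizing constants, in particular the one extra $\delta$-factor in the $\H$ case and the integral ratio in the $\F$ case. Both reduce to the identity from \eqref{ad}. The only other check is that the denominator $\int_{J_t}^\infty P_{W_1}(dw)$ is positive $P$-a.s., which holds as in Theorem~\ref{thm!}.
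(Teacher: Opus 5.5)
Your proposal is correct and follows the paper's route: the paper omits the proof as an immediate consequence of Theorem~\ref{thm!}, and your argument supplies the omitted details. These are the membership of $(\ga_r,\delta_r)$ in $\F_P\times\G_P$ via $M_{W_1}(-y(r))=1/M_{X_1}(r)$, the identification of the pair through Remark~\ref{remp}(a), and the rewriting of $L^{(\ga_r,\delta_r)}_t$ and $M^{(\ga_r,\delta_r)}_t$ using $T_{N_t+1}=t+V_t$, $T_{N_t}=t-J_t$ and $y(r)=\theta(r)+c\cdot r$. Your bookkeeping checks out, including the single surviving factor $M_{X_1}(r)$ in both densities.
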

	
\noindent The proof follows immediately from  Theorem~\ref{thm!} and therefore it is omitted.
	
\begin{rems}
\label{mark}
\normalfont
{\bf (a)}  The $(P,\H)$-martingale $L^{(r)}{\,:=\,}\{L_t^{(r)}\}_{t\in\R_+}$ in Corollary~\ref{fmar} coincides with the one appearing in Dassios~\cite{das}, Theorem~1.1.3. Its construction there requires the forward markovization of $R^u$, obtained by considering the bivariate Markov process $\{(R^u_t,V_t)\}_{t\in\R_+}$ and applying either the theory of PDMPs (see Dassios~\cite{das}, Theorem~1.1.3) or the theory of MAPs (cf., e.g., Asmussen \& Albrecher~\cite{asal}, pp.~159-160). In contrast, the present construction relies only on elementary probabilistic arguments.\smallskip

\noindent	{\bf (b)} In the special case $P_{W_1}{\,\ll\,}\lambda$, the $(P,\F)$-martingale involved in condition~\eqref{sch2} coincides with the one presented  in Dassios \& Embrechts~\cite{daem}, Theorem~10, which is the martingale arising  from the backward markovization of a reserve process. Note, however, that in Corollary~\ref{fmar}, the assumption $P_{W_1}{\,\ll\,}\lambda$ is not required for the construction.
\end{rems}
	
	
\section{Change of measures and ruin probabilities}\label{app}
 
Classically, the martingales $L^{(r)}$ and $M^{(r)}$ appearing in Corollary~\ref{fmar} are used either to derive standard results for the ruin probability $\psi(u)$ (cf., e.g., Schmidli~\cite{scm}, Sections~8.2 and~8.3) or to construct  IS schemes (cf., e.g., Asmussen \& Albrecher~\cite{asal}, Chapter~XV, Section~3) in the Cram\'er–Lundberg and Sparre Andersen risk models. However, these approaches fundamentally rely on the existence of an adjustment coefficient (cf., e.g., Schmidli~\cite{scm}, p.~133), and therefore exclude heavy-tailed distributions. Since both martingales arise as special instances of the general martingales $L^{(\ga,\delta)}$ and $M^{(\ga,\delta)}$ introduced in Theorem~\ref{thm!}, it is natural to ask whether suitable subclasses of  $\hyperlink{fp}{\F_P}{\,\times\,} \hyperlink{gp}{\G_P}$ and $\hyperlink{ms}{\M_S}$ can be identified for applications to the ruin problem.\smallskip 

In this section we provide a complete characterization of all probability measures  \(Q{\,\in\,}\hyperlink{ms}{\M_S}\) under which ruin occurs \(Q\)-a.s.. This characterization yields an explicit representation of the ruin probability $\psi(u)$ that is suitable for simulation purposes, including settings with heavy-tailed distributions where classical exponential tilting fails.  \smallskip 

\textit{Throughout what follows, we assume that \(P{\,\in\,}\hyperlink{ms1}{\M^1_S}\) and that \(c\) is a positive real constant, satisfying the net profit condition}
	\begin{gather}
		c\cdot \E_P[W_1]>\E_P[X_1]
		\tag{NPC}
		\label{npc}
	\end{gather}
\textit{since otherwise ruin occurs \(P\)-a.s.. 
}
\smallskip 

In order to formulate the desired characterization, we introduce the classes
\[
\hypertarget{rms}{\mathcal{R}_S} :=\Big\{Q\in\hyperlink{ms1}{\M^1_S} : Q\big(\{\tau_u<\infty\}\big)=1\Big\}
\]
and 
\[
\hypertarget{rp}{\C}_P:=\Big\{(\ga,\delta)\in\hyperlink{fp1}{\F^1_P}{\,\times\,} \hyperlink{gp1}{\G^1_P} : c\cdot \E_P\big[W_1\cdot e^{\delta(W_1)}\big]\leq\E_P\big[X_1\cdot e^{\ga(X_1)}\big]\Big\}.
\]

 \begin{rem}
	\normalfont
	\label{prem}
\textbf{(a)} The classes \(\hyperlink{rp}{\C_P}\) and \(\hyperlink{rms}{\mathcal{R}_S}\) are proper subclasses of \(\hyperlink{fp1}{\F^1_P}{\,\times\,} \hyperlink{gp1}{\G^1_P}\) and \(\hyperlink{ms1}{\M^1_S}\), respectively.\smallskip
	
In fact,  assume that \(r_{W_1}{\,:=\,}\sup\big\{r{\,\in\,}\R_+ {\,:\,} M_{W_1}(r){\,<\,}\infty\big\}{\,>\,}0\), and that \(\E_P[X_1^2]{\,<\,}\infty\). For a fixed \(r{\,\in\,}(0,r_{W_1})\) define the \(\B(0,\infty)\)-measurable functions  \(\ga_r,\delta_r\)  by 
\[
\ga_r(x){\,:=\,}-r\cdot x-\ln M_{X_1}(-r)\text{ for every } x{\,\in\,}(0,\infty)
\]
 and 
\[
\delta_r(w){\,:=\,} r\cdot w-\ln M_{W_1}(r)\text{ for all }w{\,\in\,}(0,\infty),
\]
respectively. A direct computation shows that
 \((\ga_r,\delta_r){\,\in\,} \hyperlink{fp1}{\F^1_P}{\,\times\,} \hyperlink{gp1}{\G^1_P}\). Thus, by Theorem~\ref{thm!}(ii) and Remark~\ref{remp}(d), there exists a unique probability measure \(Q^{(r)}{\,:=\,}Q^{(\ga_r,\delta_r)}{\,\in\,}\hyperlink{ms1}{\M^1_S}\) determined by \eqref{mq} and satisfying \eqref{mart1}.  \smallskip	
 
Next, define the functions \(h_1,h_2{\,:\,}[0,r_{W_1}){\,\to\,}\R\)   by   
\[
h_1(r){\,:=\,}-\ln M_{X_1}(-r)\text{ and } h_2(r){\,:=\,}\ln M_{{W}_1}(r)\text{ for all } r{\,\in\,}[0,r_{W_1}),
\]
 respectively. Then  \(h_1^{\prime\prime}(r){\,<\,}0\) and \(h_2^{\prime\prime}(r){\,>\,}0\) for all \(r{\,\in\,}[0,r_{W_1})\), implying that  \(h_1\) is strictly concave and \(h_2\) is strictly convex; hence,  \(h_1^\prime\)  is strictly decreasing and \(h_2^\prime\) is strictly increasing on \([0,r_{W_1})\). Therefore, 
\[
\E_P[X_1] = h_1^\prime(0) > h_1^\prime(r)= \E_P\big[X_1\cdot e^{\ga_r(X_1)}\big] \text{ and } \E_P[W_1] = h_2^\prime(0) < h_2^\prime(r)=\E_P\big[W_1\cdot e^{\delta_r(W_1)}\big]
 \]
 for all \(r{\,\in\,}(0,r_{W_1})\). This, along with \eqref{npc}, yields 
\[
\frac{\E_P\big[X_1\cdot e^{\ga_r(X_1)}\big]}{\E_P\big[W_1\cdot e^{\delta_r(W_1)}\big]}<\frac{\E_P[X_1]}{\E_P[W_1]}<c\Leftrightarrow  \frac{\E_{Q^{(r)}}[X_1]}{\E_{Q^{(r)}}[W_1]}<\frac{\E_P[X_1]}{\E_P[W_1]}<c,
\]
for all \(r{\,\in\,}(0,r_{W_1})\); hence,  \((\ga_r,\delta_r){\,\notin\,}\hyperlink{rp}{\C_P}\) and \(Q^{(r)}{\,\notin\,}\hyperlink{rms}{\mathcal{R}_S}\), which proves that
\[
\hyperlink{rp}{\C_P}\subsetneqq \hyperlink{fp1}{\F^1_P} \times \hyperlink{gp1}{\G^1_P}\text{ and } \hyperlink{rms}{\mathcal{R}_S} \subsetneqq \hyperlink{ms1}{\M^1_S}.
 \]
	
\noindent\textbf{(b)}  Let \((\ga,\delta){\,\in\,}\hyperlink{fp1}{\F^1_P}{\,\times\,} \hyperlink{gp1}{\G^1_P}\).
If \(\ga\) is increasing and \(\delta\) is decreasing, then, applying Schmidt~\cite{sc14}, Theorem~2.2, we obtain
\[
\E_{P}\big[X_1 \cdot e^{\ga(X_1)}\big]\geq\E_{P}[X_1]\text{ and } \E_{P}\big[W_1\cdot  e^{\delta(W_1)}\big]\leq \E_{P}[W_1].
\]
This implies that the pair \((\ga,\delta)\) is an element of \(\hyperlink{rp}{\C_P}\).

\end{rem}	
	
The following result, which is a direct consequence of Theorem~\ref{thm!}, provides a complete characterization of the relevant objects and establishes a one-to-one correspondence between \(\hyperlink{rp}{\C_P}\) and \(\hyperlink{rms}{\mathcal{R}_S}\). In doing so, it extends the classical Esscher-based representation of the ruin probability to settings beyond the light-tailed case.
 
\begin{thm}\label{gform}  
The following assertions hold:
\begin{enumerate}
\item
for every  \(Q{\,\in\,}\hyperlink{rms}{\mathcal{R}_S}\) there exists an essentially unique  \((\ga,\delta){\,\in\,}\hyperlink{rp}{\C_P}\) satisfying conditions \eqref{mart} and \eqref{mart1}. In particular, the ruin probability admits the representation
\begin{gather}
\psi(u)=\E_Q\Big[e^{-S_{\tau_u}^{(\ga)}-T^{(\delta)}_{N_{\tau_u}}}\Big]\,\,\text{ for all } u\in\R_+;
\label{ruin}
\tag{$\text{ruin}(P)$}
\end{gather} 
\item
conversely, for every \((\ga,\delta){\,\in\,}\hyperlink{rp}{\C_P}\), condition \eqref{mq} determines a unique probability measure $Q{\,:=\,}Q^{(\ga,\delta)}{\,\in\,}\hyperlink{rms}{\mathcal{R}_S}$ satisfying  \eqref{mart1}. In particular, the ruin probability admits the representation \eqref{ruin}.
\end{enumerate}
\end{thm}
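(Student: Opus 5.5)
The plan is to combine Theorem~\ref{thm!} (together with Remark~\ref{remp}(d)) with the classical dichotomy for random walks, and then to obtain \eqref{ruin} by an optional-stopping / change-of-measure argument at the stopping time $\tau_u$.

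\emph{Step 1 (identification).} By Theorem~\ref{thm!} and Remark~\ref{remp}(d), the map $(\ga,\delta)\mapsto Q^{(\ga,\delta)}$ is a bijection from $\hyperlink{fp1}{\F^1_P}{\,\times\,}\hyperlink{gp1}{\G^1_P}$ onto $\hyperlink{ms1}{\M^1_S}$. Remark~\ref{remp}(a) gives
\[
\E_Q[X_1]=\E_P\big[X_1 e^{\ga(X_1)}\big]
\quad\text{and}\quad
\E_Q[W_1]=\E_P\big[W_1 e^{\delta(W_1)}\big].
\]
So it suffices to show, for $Q\in\hyperlink{ms1}{\M^1_S}$, that
\[
Q(\{\tau_u<\infty\})=1 \iff c\,\E_Q[W_1]\le \E_Q[X_1].
\]
Ruin can only occur at claim epochs. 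Hence $\{\tau_u<\infty\}=\{\sup_n Y_n>u\}$, where $Y_n:=\sum_{k=1}^n (X_k-cW_k)$ is a $Q$-random walk with integrable i.i.d.\ increments, because $S$ is a $Q$-CRP.
\begin{itemize}
\item If $\E_Q[X_1-cW_1]>0$, the strong law gives $Y_n\to\infty$ $Q$-a.s.
\item If $\E_Q[X_1-cW_1]=0$, the increments are non-degenerate, since a.s.\ constancy of $X_1-cW_1$ with zero mean is compatible with ruin failing only in a trivial case I would check separately. The Chung--Fuchs theorem then gives $\limsup Y_n=+\infty$ a.s.
\item If $\E_Q[X_1-cW_1]<0$, then $Y_n\to-\infty$, so $\sup_n Y_n<\infty$ a.s. Its distribution is nondegenerate with positive mass on $\{\sup_n Y_n\le u\}$ for $u$ large. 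For every $u$, $Q(\sup_n Y_n\le u)\ge Q(Y_1\le 0,\dots)>0$ by the standard ladder argument ($P(\sup_n Y_n\le 0)>0$ when the drift is negative), so ruin is not a.s.
\end{itemize}
This yields the equivalence, and hence both (i) and the first half of (ii).

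\emph{Step 2 (representation).} Fix $Q=Q^{(\ga,\delta)}\in\hyperlink{rms}{\mathcal{R}_S}$. I will first show that $\tau_u$ is an $\F$-stopping time and that on $\{\tau_u<\infty\}$ it coincides with a claim epoch $T_{N_{\tau_u}}$. The key identity to establish is, for $A\in\F_{\tau_u}$ with $A\subseteq\{\tau_u<\infty\}$,
\[
Q(A)=\E_P\big[\1_A\, e^{S^{(\ga)}_{\tau_u}+T^{(\delta)}_{N_{\tau_u}}}\big].
\]
To prove it, decompose $A=\bigcup_n A\cap\{\tau_u=T_n\}$. The event $A\cap\{\tau_u=T_n\}$ lies in $\G_{n,n}=\sigma(\F^X_n\cup\F^W_n)$, since the path up to the $n$-th claim determines it. Step (c) in the proof of Theorem~\ref{thm!}, applied with $n$ in place of $n+1$ for $W$, shows that on $\G_{n,n}$ the density is $e^{\sum_{j\le n}\ga(X_j)+\sum_{j\le n}\delta(W_j)}$. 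This computation is direct and avoids delicate optional-stopping issues with $M^{(\ga,\delta)}$ at $J_{\tau_u}=0$; note that $M^{(\ga,\delta)}_{\tau_u}$ also reduces to this expression because $J_{\tau_u}=0$.

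Applying the identity with $A=\{\tau_u<\infty\}$, replacing the indicator by $\1_A e^{-(\cdot)}$ via monotone class, and using $Q(\tau_u<\infty)=1$, we get
\[
\psi(u)=P(\tau_u<\infty)=\E_Q\big[e^{-S^{(\ga)}_{\tau_u}-T^{(\delta)}_{N_{\tau_u}}}\big].
\]

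\emph{Main obstacle.} I expect the hardest step to be the zero-drift boundary case in Step 1, which needs recurrence of the random walk via Chung--Fuchs and the exclusion of degenerate increments, together with the measurability claim that $\{\tau_u=T_n\}\cap A\in\G_{n,n}$. Everything else is bookkeeping with Theorem~\ref{thm!}.
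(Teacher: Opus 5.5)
Your proof is correct. It differs from the paper mainly in how you obtain \eqref{ruin}.

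\textbf{The ruin dichotomy.} For $Q(\{\tau_u<\infty\})=1\Leftrightarrow c\cdot\E_Q[W_1]\le\E_Q[X_1]$ the paper simply cites Schmidli (p.~132) and Schmidt (Corollary~7.1.4), whereas you re-derive it from the random walk $Y_n$. The degenerate zero-drift case you set aside is not harmless: if $X_1=c\cdot W_1$ $Q$-a.s., then $Y_n\equiv 0$ and ruin never occurs. It is, however, excluded by (NPC):
\begin{enumerate}
\item $Q$-a.s.\ constancy of $X_1-c\cdot W_1$, together with $Q$-independence, makes $X_1$ and $W_1$ both $Q$-a.s.\ constant;
\item $Q_{X_1}\sim P_{X_1}$ and $Q_{W_1}\sim P_{W_1}$ transfer the same constants to $P$;
\item zero drift would then give $c\cdot\E_P[W_1]=\E_P[X_1]$, contradicting (NPC).
\end{enumerate}

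\textbf{The representation.} The paper applies optional sampling to the $(Q,\H)$-martingale $1/L^{(\ga,\delta)}$ at $\tau_u\wedge y$ and lets $y\uparrow\infty$. It must then remove the surplus factor $e^{-\delta(W_{N_{\tau_u}+1})}$ by conditioning on $\F_{\tau_u}$ (its step (a), using $J_{\tau_u}=0$). This requires right-continuity of $\F$ and optional stopping at $\tau_u$. You instead split over $\{\tau_u=T_n\}$ and use the finite-dimensional density $e^{\sum_{j\le n}\ga(X_j)+\sum_{j\le n}\delta(W_j)}$ on $\G_{n,n}=\sigma(\F^X_n\cup\F^W_n)$, so that factor never arises.

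You do not need the general claim $A\cap\{\tau_u=T_n\}\in\G_{n,n}$ for all $A\in\F_{\tau_u}$, which would require a stopping-time version of Jacod's Proposition~3.39. It suffices that $\{\tau_u=T_n\}=\{Y_1\le u,\dots,Y_{n-1}\le u,\,Y_n>u\}\in\G_{n,n}$ and that the weight is $\G_{n,n}$-measurable on this set. Then $\E_Q\big[\1_{\{\tau_u=T_n\}}e^{-S^{(\ga)}_{\tau_u}-T^{(\delta)}_{N_{\tau_u}}}\big]=P(\{\tau_u=T_n\})$ directly, and summing over $n$ gives $\psi(u)$.

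\textbf{Comparison.} Your route is more elementary and shows transparently why only $N_{\tau_u}$ interarrival times enter the weight, exactly as in Step~2(c) of the algorithm. It also carries over verbatim to the finite-horizon and Gerber--Shiu variants. The paper's route is shorter given its cited machinery, and it keeps the weight tied to the martingale $L^{(\ga,\delta)}$, which is the ``martingale representation'' the authors emphasize.
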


\begin{proof}
Let \(u,t{\,\in\,}\R_+\) be arbitrary but fixed. \smallskip 

\noindent Ad (i): Let \(Q{\,\in\,}\hyperlink{rms}{\mathcal{R}_S}\). As \(\hyperlink{rms}{\mathcal{R}_S}{\,\subsetneqq\,}\hyperlink{ms1}{\M^1_S}\) by Remark~\ref{prem}(a), we can apply Theorem~\ref{thm!}(i) together with Remark~\ref{remp}(d) to get an essentially unique pair \((\ga,\delta){\,\in\,}\hyperlink{fp1}{\F^1_P} \times \hyperlink{gp1}{\G^1_P}\) such that conditions \eqref{mart} and \eqref{mart1} hold true. But since \(Q\big(\{\tau_u{\,<\,}\infty\}\big){\,=\,}1\) we have \(c\cdot\E_Q[W_1] {\,\leq\,}\E_Q[X_1]\) \adc{(cf., e.g., Schmidli~\cite{scm}, p.~132)}. This, along with Remark~\ref{remp}(a), yields \( c\cdot \E_P\big[W_1\cdot e^{\delta(W_1)}\big]{\,\leq\,}\E_P\big[X_1\cdot e^{\ga(X_1)}\big]\), implying that  \((\ga,\delta)\) is an element of \(\hyperlink{rp}{\C_P}\). We split the remainder of the proof into the following two steps.\smallskip 

\noindent \textbf{(a)} The following condition holds     
\[
\E_Q\big[e^{-\delta(W_{N_{t}+1})}\mid\F_{t}\big]=\frac{\int_{J_{t}}^\infty \,P_{W_1}(dw)}{\int_{J_{t}}^\infty e^{\delta(w)}\,P_{W_1}(dw)}\quad Q{\uph}\F_{t}\text{-a.s.}.
\]

In fact, since the processes $L^{(\ga,\delta)}$ and $M^{(\ga,\delta)}$, involved in conditions \eqref{mart} and \eqref{mart1}, are a.s. positive $(P,\H)$- and $(P,\F)$-martingales, respectively, and $P{\uph}\H_t{\,\sim\,}Q{\uph}\H_t$, we get that the families $1/L^{(\ga,\delta)}$ and $1/M^{(\ga,\delta)}$ are a.s. positive $(Q,\H)$- and $(Q,\F)$-martingales, respectively. Thus, for every \(A{\,\in\,}\F_t\) we get 
\[
\int_A 1/L_t^{(\ga,\delta)}\,dQ=\int_A \E_Q\big[1/L_t^{(\ga,\delta)}\mid\F_t\big]\,dQ=\int_A 1/M_t^{(\ga,\delta)}\,dQ,
\]
implying, together with the \(\F_t\)-measurability of \(S_{t}^{(\ga)}\) and \(T^{(\delta)}_{N_{t}}\), that 
\[
\int_A e^{-S_{t}^{(\ga)}-T^{(\delta)}_{N_{t}}}\cdot\Big(\E_Q\big[e^{-\delta(W_{N_{t}+1})}\mid\F_{t}\big]-\frac{\int_{J_{t}}^\infty \,P_{W_1}(dw)}{\int_{J_{t}}^\infty e^{\delta(w)}\,P_{W_1}(dw)}\Big)\,dQ=0;
\]
hence 
\[
\E_Q\big[e^{-\delta(W_{N_{t}+1})}\mid\F_{t}\big]=\frac{\int_{J_{t}}^\infty \,P_{W_1}(dw)}{\int_{J_{t}}^\infty e^{\delta(w)}\,P_{W_1}(dw)}\quad Q{\uph}\F_{t}\text{-a.s.}.
\]
This completes the proof of step (a).\smallskip

\noindent \textbf{(b)} Condition \eqref{ruin} holds true.\smallskip 

 Since \(1/L^{(\ga,\delta)}\) is an a.s. positive \((Q,\H)\)-martingale, and  \(\F\) is right-continuous (cf., e.g., Jacod~\cite{jacod}, Proposition~3.39), it follows by e.g.,  Rolski et al.~\cite{rss}, Theorem~10.1.1, that $\tau_u$ is an $\F$-stopping time (and thus an $\H$-stopping time, since $\F_t{\,\subseteq\,}\H_t$); hence, we can apply e.g., Rolski et al.~\cite{rss}, Lemma~10.2.2(b), to get  
\[
\psi(u,y)=\E_Q\Big[\1_{\{\tau_u\leq y\}}\cdot 1/L^{(\ga,\delta)}_{\tau_u}\Big] 
\]
for every real number $y{\,>\,}0$. Letting now $y{\,\uparrow\,}\infty$, we get by the Monotone Convergence Theorem that
\begin{gather*}
	\psi(u)=\E_{Q}\Big[\1_{\{\tau_u<\infty\}}\cdot 1/L^{(\ga,\delta)}_{\tau_u}\Big],
	\label{ruinfor}
\end{gather*} 
implying, together with the equality \(Q\big(\{\tau_u{\,<\,}\infty\}\big){\,=\,}1\),  that 
\begin{gather}
	\psi(u)=\E_Q\Big[1/L^{(\ga,\delta)}_{\tau_u}\Big]=\E_Q\Big[ e^{-S_{\tau_u}^{(\ga)}-T^{(\delta)}_{N_{\tau_u}}} \cdot  \E_Q\big[e^{-\delta(W_{N_{\tau_u}+1})}\mid\F_{\tau_u}\big]\Big].
	\label{ruin321}
\end{gather}
Using now step (a), in conjunction with the Optional Stopping Theorem (cf., e.g., Schmidli~\cite{scm}, Proposition~B.2), we get
\[
\E_Q\big[e^{-\delta(W_{N_{\tau_u}+1})}\mid\F_{\tau_u}\big]=\frac{\int_{J_{\tau_u}}^\infty \,P_{W_1}(dw)}{\int_{J_{\tau_u}}^\infty e^{\delta(w)}\,P_{W_1}(dw)}=1\quad Q{\uph}\F_{\tau_u}\text{-a.s.,}
\]
where the second equality follows since \(J_{\tau_u}{\,=\,}0\) and \(\delta{\,\in\,}\hyperlink{gp}{\G_P}\). This, together with condition~\eqref{ruin321},  proves \eqref{ruin}, completing the proof of (b) and the whole proof of statement (i).\smallskip  

\noindent Ad (ii): Let \((\ga,\delta){\,\in\,}\hyperlink{rp}{\C_P}\). As \(\hyperlink{rp}{\C_P}{\,\subsetneqq\,}\hyperlink{fp1}{\F^1_P} \times \hyperlink{gp1}{\G^1_P}\) by Remark~\ref{prem}(a), we can apply Theorem~\ref{thm!}(ii) together with Remark~\ref{remp}(d) to obtain a unique probability measure $Q{\,:=\,}Q^{(\ga,\delta)}{\,\in\,}\hyperlink{ms1}{\M^1_S}$ determined by condition \eqref{mq}  and satisfying \eqref{mart1}. But since  \((\ga,\delta){\,\in\,}\hyperlink{rp}{\C_P}\), we get  \( c\cdot \E_P\big[W_1\cdot e^{\delta(W_1)}\big]{\,\leq\,}\E_P\big[X_1\cdot e^{\ga(X_1)}\big]\), implying, together with Remark~\ref{remp}(a), that \(c\cdot \E_Q[W_1]\leq\E_Q[X_1]\). Thus, \(Q\big(\{\tau_u{\,<\,}\infty\}\big){\,=\,}1\) (cf., e.g., Schmidt~\cite{Sc}, Corollary 7.1.4); hence, \(Q{\,\in\,}\hyperlink{rms}{\mathcal{R}_S}\). The proof of condition \eqref{ruin} follows by the arguments used in the proof of assertion (i). This completes the proof of statement (ii) and the whole proof.  
\end{proof}

\begin{rem}\label{pemmcl}\normalfont
Let $Y{\,:=\,}\{Y_t\}_{t\in\R_+}$ be a stochastic process on $(\vO,\vS)$ that is adapted to a filtration $\G{\,:=\,}\{\G_t\}_{t\in\R_+}$. A probability measure $Q$ on $\vS$  is called a  \textit{\(\G\)-progressively equivalent martingale measure} (\(\G\)-PEMM) for $Y$ if $Q{\uph}\G_t{\,\sim\,}P{\uph}\G_t$ for all \(t{\,\in\,}\R_+\) and $Y$ is a  $(Q, \G)$-martingale.\smallskip

The use of PEMMs in non-life insurance mathematics was proposed by Delbaen \& Haezendonck in their seminal paper \cite{dh} in an effort to  \enquote{create a mathematical framework to deal with finance related to risk processes}  within the context of CPPs, see also M\o{}ller~\cite{moller04}, Section 4.  The main result in Delbaen \& Haezendonck~\cite{dh} is a characterization of all progressively equivalent probability measures  $Q$ on the domain of $P$ such that a \(P\)-CPP remains a \(Q\)-CPP, see \cite{dh}, Proposition 2.2. Lyberopoulos \& Macheras~\cite{lm3} generalized this characterization for mixed compound Poisson processes (see \cite{lm3}, Theorem 5.3), while Macheras \& Tzaninis~\cite{mt3} and Tzaninis \& Macheras~\cite{mt2} extended the result of  Delbaen \& Haezendonck by characterizing all progressively equivalent probability measures  $Q$ on the domain of $P$  that convert a given \(P\)-compound (mixed) renewal process into a \(Q\)-compound (mixed) Poisson one (see \cite{mt3}, Corollary 3.1, and \cite{mt2}, Corollary 4.8).\smallskip 
		
In particular, \(\F\)-PEMMs appear to be natural candidates for obtaining ruin-inducing probability. Denote by \(\hypertarget{martc}{\mathbb{M}^\F_S}\) the class of all \(\F\)-PEMMs for the price process \(\{u-R^u_t\}_{t\in\R_+}\). Since \(u-R^u_t{\,=\,}S_t-c\cdot t\) for all \(t{\,\in\,}\R_+\), it follows from Ikeda \& Watanabe~\cite{ikwa89},  Theorem II.6.2, that
		\[
		\hyperlink{martc}{\mathbb{M}^\F_S}=\big\{Q\in\hyperlink{ms1}{\M^1_S} : S \text{ is a } Q\text{-CPP and } c\cdot\E_Q[W_1]{\,=\,}\E_Q[X_1]\big\}\subsetneqq \hyperlink{rms}{\mathcal{R}_S},
		\]
		where the strict inclusion follows since the equality \(c\cdot\E_Q[W_1]{\,=\,}\E_Q[X_1]\) implies that \(Q\big(\{\tau_u{\,<\,}\infty\}\big){\,=\,}1\) (see, e.g., Schmidt~\cite{Sc}, Corollary 7.1.4). 
\end{rem}
 
Theorem~\ref{gform} provides a martingale representation of the ruin probability \(\psi(u)\) by introducing a new probability measure \(Q\) that preserves the structure of the given CRP and under which ruin occurs \(Q\)-a.s.. Such a measure is obtained by modifying the distribution of  claim sizes (through the function \(\ga\)) and/or the distribution of   claim interarrival times  (through the function \(\delta\)). However, for practical purposes, e.g. for an IS simulation, representation~\eqref{ruin} is still too general and must be adapted to a discrete time framework. Define 
\[
G_n:=X_n-c\cdot W_n\text{ for every } n\in\N. 
\] 
and 
\[
Z_n:=\begin{cases}
0, & \text{ for } n=0,\\
\sum_{k=1}^n G_k, & \text{ for } n\in\N.
\end{cases}
\]
Since \(N\) has zero probability of explosion, it follows by, e.g., Schmidt~\cite{Sc}, Lemma~7.1.2, that 
\[
\psi(u)=P\big(\{\sup_{n\in\N_0} Z_n\geq u\}\big).
\]
We now present a simple IS algorithm for estimating \(\psi(u)\), based on  condition \eqref{ruin} and the discretization above. Note that the algorithm always converges since ruin occurs \(Q\)-a.s..\smallskip 
 
 \begin{myblock}[label=alg1]{Importance Sampling  estimation of $\psi(u)$ using Theorem~\ref{gform}}\smallskip 
    \label{alg:IS_ruin}
		\begin{enumerate}
			\item[\textit{Inputs:}] 
			\begin{enumerate}
				\item[\(\bullet\)] The distributions \(P_{X_1}\) and \(P_{W_1}\);
				\item[\(\bullet\)] the initial reserve \(u{\,\in\,}\R_+\);
				\item[\(\bullet\)] the premium rate \(c{\,\in\,}\big( \E_P[X_1]/\E_P[W_1],\infty)\);
				\item[\(\bullet\)] a pair \((\ga,\delta){\,\in\,}\hyperlink{rp}{\C_P}\);
				\item[\(\bullet\)] the number of simulations \(K\).
			\end{enumerate}
			\item[\textit{Output:}] An estimate \(\Psi_K(u)\)  of \(\psi(u)\).
		\end{enumerate}
		
		\noindent \textbf{Step 1.} Obtain the probability distributions
		\[
		Q^{(\ga,\delta)}_{X_1}(dx)=e^{\ga(x)}\cdot P_{X_1}(dx)\,\,\text{ and }\,\,  Q^{(\ga,\delta)}_{W_1}(dw)=e^{\delta(w)}\cdot P_{W_1}(dw)
		\]
		\noindent \textbf{Step 2.} For \(i = 1, \ldots, K\):
		\begin{enumerate}
			\item[(a)] Set \(Z^{(i)}_0 \leftarrow 0\). 
			\item[(b)] For \(n\in\N\), repeat the loop:
			\begin{itemize}
				\item[\(\diamond\)] Sample \(W^{(i)}_n\) from \(Q^{(\gamma,\delta)}_{W_1}\) and \(X^{(i)}_n\) from \(Q^{(\gamma,\delta)}_{X_1}\).
				\item[\(\diamond\)] Update \(Z^{(i)}_n \leftarrow Z^{(i)}_{n-1} + X^{(i)}_n - c \cdot W^{(i)}_n\).
				\item[\(\diamond\)] Terminate the loop if \(Z^{(i)}_n \ge u\).
			\end{itemize} 
			\item[(c)] If  \(Z^{(i)}_n \ge u\), then set \(\tau^{(i)}_u\leftarrow T^{(i)}_{n}=:\sum_{m=1}^n W^{(i)}_m\) and \(N_{\tau_u}^{(i)}:=N_{\tau^{(i)}_u}^{(i)}  \leftarrow n\) and compute
			\[
			\ell^{(\ga,\delta)}_i(u):=\exp\bigg(- \sum_{j=1}^{N_{\tau_u}^{(i)}}\gamma(X^{(i)}_j)
			- \sum_{j=1}^{N_{\tau_u}^{(i)}} \delta(W^{(i)}_j)\bigg).
			\]
		\end{enumerate}
		\noindent \textbf{Step 3.} Return the IS estimate of \(\psi(u)\)
		\[
		\Psi_K(u):=\Psi^{(\ga,\delta)}_K(u):=\frac{1}{K}\cdot\sum_{i=1}^K \ell^{(\ga,\delta)}_{i}(u) .
		\]
	\end{myblock} 
	 
\begin{rems}\label{gerber-shiu}\normalfont
\textbf{(a)} Since their introduction by Gerber \& Shiu~\cite{gesh98}, Gerber-Shiu functions have become a central object in actuarial science, as they provide a unified framework for the analysis of a wide range of ruin-related quantities. For an initial reserve $u{\,\in\,}\R_+$, the \textit{Gerber-Shiu} function is defined as the expected discounted penalty at ruin
\[
\phi_\eta(u)=\E_P\Big[\1_{\{\tau_u<\infty\}}\cdot e^{-\eta\cdot\tau_u}\cdot w\big(R^u_{\tau_u-},\xi(u)\big)\Big]
\]
where $\eta{\,\in\,}\R_+$ is a \textit{discount rate}, $w{\,:\,}(0,\infty)^2 {\,\to\,} \R_+$ is a \textit{penalty function}, $R^u_{\tau_u-}{\,:=\,}\lim_{t\uparrow \tau_u} R^u_t$ denotes the \textit{surplus prior to ruin}, and $\xi(u){\,=\,}|R^u_{\tau_u}|$ is the \textit{deficit at ruin}, see~\cite{gesh98}, pp.~49-50 (see also He et al.~\cite{hksy23} for an overview of subsequent generalizations). Closed-form or semi-closed representations for $\phi_\eta$ are relatively scarce in the literature, even for classical risk models such as the Cram\'{e}r-Lundberg or Sparre Andersen ones. In particular, in the presence of heavy-tailed claim size distributions, most available results are restricted to asymptotic approximations, see, for example, Cheng \& Tang~\cite{chta03} and Tang \& Wei~\cite{taw10}.\smallskip 

The exponential change of measures framework,  developed for \(\psi(u)\) in Theorem~\ref{gform}, extends naturally to Gerber-Shiu functions.   In fact, for a given pair \((\ga,\delta){\,\in\,}\hyperlink{rp}{\C_P}\), it follows from Theorem~\ref{gform}(ii) that  condition~\eqref{mq} determines a unique probability measure \(Q{\,:=\,}Q^{(\ga,\delta)}{\,\in\,}\hyperlink{rms}{\mathcal{R}_S}\), while by applying the arguments  appearing in the proof of formula~\eqref{ruin}, one can show that \(\phi_\eta\) admits the representation
\[
\phi_\eta(u)=\E_Q\Big[e^{-\eta\cdot\tau_u}\cdot w\big(R^u_{\tau_u-},\xi(u)\big)\cdot e^{-S_{\tau_u}^{(\ga)}-T^{(\delta)}_{N_{\tau_u}}}\Big]\,\,\text{ for every } u\in\R_+.
\]
This representation provides a convenient starting point for the numerical estimation of Gerber-Shiu functions via IS simulation.  While the above discussion shows that Gerber-Shiu functions can be treated within our framework, we do not consider them further in this paper. \smallskip 

\noindent \textbf{(b)} The framework developed in Theorem~\ref{gform} can also accommodate  both  finite-time ruin probabilities and  alternative solvency thresholds.
\smallskip 

In fact, since \(P\) and \(Q\) are progressively equivalent on \(\H\), then for all \(y{\,\in\,}(0,\infty)\) and \(u{\,\in\,}\R_+\)  the finite-time ruin probability admits the representation 
\[
\psi(u,y) = P\big(\{\tau_u\leq y\}\big)=\E_Q \Big[\1_{\{\tau_u \leq y\}} \cdot e^{-S_{\tau_u}^{(\ga)}-T^{(\delta)}_{N_{\tau_u}}} \Big].
\]
Accordingly, the IS algorithm presented above must be modified as follows: in Step~2(b) the simulation is terminated either when \(Z^{(i)}_n {\,\geq\,} u\) (ruin) or when \(\tau^{(i)}_u{\, >\,} y\) (time horizon exceeded).   The resulting estimator is
\[
\Psi_K(u,y)=\frac{1}{K}\cdot\sum_{i=1}^K\1_{\{\tau_u^{(i)} \le y\}}\cdot
\exp\bigg(- \sum_{j=1}^{N_{\tau_u}^{(i)}} \gamma(X_j^{(i)})
- \sum_{j=1}^{N_{\tau_u}^{(i)}} \delta(W_j^{(i)})
\bigg).
\]
 
Furthermore, let \(b{\,\in\,}[0,u]\) and define the  stopping time \(\tau_{u,b}{\,:=\,}\inf\{ t{\,\in\,}\R_+ {\,:\,} R_t^u {\,<\,} b \}\).  Since \(R_t^u - b{\,=\,}R_t^{u-b}\), the event \(\{\tau_{u,b} {\,<\,} \infty\}\) coincides with the ruin event for initial capital \(u-b\). Hence, by Theorem~\ref{gform} we get
\[
P\big(\{\tau_{u,b} < \infty\}\big)=\E_Q\Big[
e^{-S_{\tau_{u,b}}^{(\gamma)} - T^{(\delta)}_{N_{\tau_{u,b}}}}\Big]\,\,\text{ for all } u\in\R_+\text{ and } b\in[0,u],
\]
and the IS algorithm applies without other modifications than replacing \(u\) by \(u-b\).\smallskip
 
In particular, regulatory capital requirements such as the \textit{Solvency Capital Requirement} (SCR) under Solvency II can be treated within the same framework, both over finite and infinite regulatory horizons, by combining the spatial shift described above with the finite-time stopping rule when appropriate. 
\end{rems}

\section{Examples}\label{examples}

In this section, we illustrate how Theorem~\ref{gform} can be used to construct ruin-inducing probability measures and   explicit representations of  \(\psi(u)\) for three representative choices of \((\ga,\delta){\,\in\,}\hyperlink{rp}{\C_P}\). These  include, as special cases of our framework, (i) the Esscher transform (see Section~\ref{SALT}), (ii) a linear tilting approach (see Section~\ref{mmm}), and (iii) a hazard-rate twisting approach (see Section~\ref{PH}). This demonstrates that the proposed framework unifies several change of measures techniques that have been developed in different areas in the literature.

\makeatletter
\renewcommand{\subsection}{\@startsection{subsection}{2}{0pt}%
	{-3.25ex plus -1ex minus -.2ex}%
	{1.5ex plus .2ex}%
	{\normalfont\bfseries}} 
\makeatother

\subsection{The Esscher transform}\label{SALT} 

In this section, we show that the classical Esscher transform, widely used in Risk Theory (cf., e.g., Schmidli~\cite{scm}, Sections  8.2 and 8.3), is not a special ad hoc construction, but rather a particular instance of the ruin-inducing class characterized in Theorem~\ref{gform}. Consequently, the traditional adjustment coefficient approach appears as a special case of a broader structural framework.\smallskip 

Assume that \(r_{X_1}{\,>\,}0\). 
For every $r{\,\in\,}[0,r_{X_1})$, consider the pair $(\ga_r,\delta_r)$ of real-valued $\B(0,\infty)$-measurable functions defined by  
\[
\ga_r(x)=r\cdot x-\ln M_{X_1}(r)\,\text{ for all } x\in(0,\infty) 
\]
and
\[
\delta_r(w)=-y(r)\cdot w-\ln M_{W_1}\big(-y(r)\big)\,\text{ for every } w\in(0,\infty), 
\] 
where $y(r){\,=\,}\theta(r){\,+\,}c\cdot r$. A direct computation shows that $(\ga_r,\delta_r){\,\in\,}\hyperlink{fp1}{\F^1_P}{\,\times\,} \hyperlink{gp1}{\G^1_P}$.\smallskip 

Since the function \(\theta\) is infinitely differentiable in a right-neighbourhood of \(0\) (cf., e.g., Schmidli~\cite{scm}, p.~133), differentiation of condition \eqref{ad} with respect to \(r\) yields
\begin{gather}
\theta^\prime(r)=\frac{\E_P\big[X_1\cdot e^{r\cdot X_1}\big]\cdot M_{W_1}\big(-y(r)\big)}{\E_P\big[W_1\cdot e^{-y(r)\cdot W_1}\big]\cdot M_{X_1}(r)}-c=\frac{\E_P\big[X_1\cdot e^{\ga_r(X_1)}\big]}{\E_P\big[W_1\cdot e^{\delta_r(W_1)}\big]}-c.
\label{es1}
\end{gather}
This, together with the~\eqref{npc}, yields \(\theta^\prime(0){\,<\,}0\). Since the function \(\theta\) is strictly convex and \(\theta(0){\,=\,}0\) (cf., e.g.,~\cite{scm}, p.~133), it follows that there exists at most one point \(r^{(m)}{\,\in\,}(0,r_{X_1})\) such that \(\theta^\prime\big(r^{(m)}\big){\,=\,}0\).  When such a point exists, we have \(\theta^\prime(r){\,>\,}0\) for every \(r{\,\in\,}(r^{(m)}, r_{X_1})\), implying,  together with condition \eqref{es1}, that \((\ga_r,\delta_r){\,\in\,}\hyperlink{rp}{\C_P}\) for all \(r{\,\in\,}[r^{(m)}, r_{X_1})\).\smallskip

Assume now that \(r^{(m)}{\,\in\,}(0,r_{X_1})\) and fix an arbitrary \(r{\,\in\,}[r^{(m)}, r_{X_1})\). By Theorem~\ref{gform}(ii), there exists a unique probability measure \(Q^{(r)}{\,:=\,}Q^{(\ga_r,\delta_r)}{\,\in\,}\hyperlink{rms}{\mathcal{R}_S}\) determined by  condition~\eqref{mq}, namely
\begin{gather}
Q^{(r)}(A)=M_{X_1}(r)\cdot\E_P\Big[\1_A\cdot e^{r\cdot S_t-\theta(r)\cdot T_{N_t+1}}\Big]\,\,\text{ for all } t\in\R_+ \text{ and } A\in\H_t,
\label{esmes}
\end{gather}
satisfying condition~\eqref{mart1}. In particular, the ruin probability admits the representation~\eqref{ruin} with 
\begin{gather}
	\psi(u)=\E_{Q^{(r)}}\big[e^{r\cdot R^u_{\tau_u}+\theta(r)\cdot \tau_u}\big]\cdot e^{-r\cdot u}\,\,\text{ for every } u\in\R_+.
	\label{332}
\end{gather}
Furthermore, if the equation \(\theta(r){\,=\,}0\) has a unique positive solution \(\rho{\,\in\,}(r^{(m)},r_{X_1})\), then  choosing \(r{\,=\,}\rho\) in condition~\eqref{332} yields
\[
\psi(u)=\E_{Q^{(\rho)}}\big[e^{\rho\cdot R^u_{\tau_u}}\big]\cdot e^{-\rho\cdot u}\,\,\text{ for all } u\in\R_+.
\]
  
\begin{srem}\label{memmrem}\normalfont
Assume that $P_{W_1}{\,=\,}{\bf Exp}(\beta)$, where $\be{\,\in\,}(0,\infty)$ (cf., e.g., Schmidli~\cite{scm}, p.~229), and denote by \(Q^{(m)}\) the probability measure on \(\vS\) determined by \eqref{esmes} for \(r{\,=\,}r_m\). Let $T{\,<\,}\infty$ be a fixed time horizon. The restriction $Q^{(m)}_T$ of $Q^{(m)}$ to $\F_T$ coincides with the \textit{minimal entropy  martingale measure} (MEMM) for CPPs (see  M\o{}ller~\cite{moller04}, Section 4.3), that is, the equivalent martingale measure (EMM) minimizing the relative entropy over all EMMs $Q_T$ for the price process $\{u-R^u_t\}_{t\in[0,T]}$.  Under the MEMM, the corresponding no-arbitrage premium is
\[
c^{(m)}:=c\big(r^{(m)}\big)=\beta\cdot\E_P\big[X_1\cdot e^{r^{(m)}\cdot X_1}\big].
\]

\end{srem}

\subsection{The linear tilting approach}\label{mmm}

A key drawback of the classical approach presented in the previous section is that the mgf $M_{X_1}$ needs to exist, which naturally excludes heavy-tailed distributions.  In what follows, we focus on the classical Cram\'{e}r-Lundberg model, and we present another special case of Theorem~\ref{gform} that leads to a method for simulating $\psi(u)$ under the additional, yet mild, assumption $\E_P[X_1^2]{\,<\,}\infty$. From an actuarial perspective, the assumption $\E_P[X_1^2]{\,<\,}\infty$ is natural, since risks with infinite variance are not considered practically admissible.\smallskip

Fix  arbitrary $t,u{\,\in\,}\R_+$. Let $P_{W_1}{\,=\,}{\bf Exp}(\beta)$ for some real number $\beta{\,>\,}0$, and assume that  $\E_P[X_1^2]{\,<\,}\infty$. For any real number $\xi{\,<\,}0$, consider the \(\B(0,\infty)\)-measurable functions 
\[
\ga_\xi(x):=\ln\frac{1-\xi\cdot x}{1-\xi\cdot\E_P[X_1]}\,\,\text { for every } x\in (0,\infty),
\]
and 
\[
\delta_\xi(w):=\ln\big(1-\xi\cdot\E_P[X_1]\big)+\xi\cdot\beta\cdot\E_P[X_1]\cdot w\,\text{ for all } w\in(0,\infty).
\] 
It then follows that 
\[
\E_P\big[e^{\ga_\xi(X_1)}\big]=\E_P\big[e^{\delta_\xi(W_1)}\big]=1,
\]
and
\[
\E_P\big[X_1\cdot e^{\ga_\xi(X_1)}\big]=\frac{\E_P[X_1]-\xi\cdot\E_P[X^2_1]}{1-\xi\cdot\E_P[X_1]}<\infty\,\text{ and }\, \E_P\big[W_1\cdot e^{\delta_\xi(W_1)}\big]=\frac{1}{\beta\cdot (1-\xi\cdot\E_P[X_1])}<\infty,
\]
implying $(\ga_\xi,\delta_\xi){\,\in\,}\hyperlink{fp1}{\F^1_P}{\,\times\,}\hyperlink{gp1}{\G^1_P}$. Furthermore, it is easy to verify that 
\[
c\cdot \E_P\big[W_1\cdot e^{\delta_\xi(W_1)}\big]\leq\E_P\big[X_1\cdot e^{\ga_\xi(X_1)}\big] \Leftrightarrow	\xi\leq \frac{\beta\cdot\E_P[X_1]-c}{\beta\cdot\E_P[X_1^2]}=:\wh{\xi}<0,
\]
which implies that \((\ga_\xi,\delta_\xi){\,\in\,}\hyperlink{rp}{\C_P}\) for every \(\xi{\,\in\,}(-\infty, \wh{\xi}]\).	Thus,  we can apply Theorem~\ref{gform}(ii) to obtain a unique probability measure $Q^{(\xi)}{\,:=\,}Q^{(\ga_\xi,\delta_\xi)}{\,\in\,}\hyperlink{ms}{\M_S}$ determined by condition~\eqref{mq}, i.e.
\begin{gather*}
	Q^{(\xi)}(A)= \big(1-\xi\cdot\E_P[X_1]\big)\cdot\E_P\Big[\1_A\cdot e^{\xi\cdot\beta\cdot\E_P[X_1]\cdot T_{N_t+1}}\cdot\prod_{j=1}^{N_{t}}(1-\xi\cdot X_j)\Big]\,\,\text{ for all } A\in\H_t,
	\label{mmmdf}
\end{gather*}
and satisfying condition \eqref{mart1}. In particular, the ruin probability admits the representation \eqref{ruin} with  
\begin{gather*}  
	\psi(u)=\E_{Q^{(\xi)}}\Big[e^{-\xi\cdot\beta\cdot\E_P[X_1]\cdot \tau_u}\cdot\prod_{j=1}^{N_{\tau_u}}(1-\xi\cdot X_j)^{-1}\Big].
\end{gather*}
By Remark~\ref{remp}(a), the probability distributions of the claim sizes and the claim interarrival times under the new measure are given by
\begin{gather}
	Q^{(\xi)}_{X_1}(B)=\frac{1}{1-\xi\cdot\E_P[X_1]}\cdot\int_B\, P_{X_1}(dx)-\frac{ \xi\cdot \E_P[X_1]}{1-\xi\cdot\E_P[X_1]}\cdot \int_B \frac{x}{\E_P[X_1]}\,P_{X_1}(dx) 
	\label{mmmcsd}
\end{gather}
for every $B{\,\in\,}\B(0,\infty)$, and $Q^{(\xi)}_{W_1}{\,=\,}\textbf{Exp}(\beta_\xi)$, where $\beta_\xi{\,=\,}\beta\cdot(1-\xi\cdot\E_P[X_1]){\,\in\,}(0,\infty)$, respectively. Condition~\eqref{mmmcsd} shows that the claim size distribution under the new measure is a mixture of the original distribution and its size-biased counterpart (for more details on size-biased distributions and their applications in actuarial science see, for example, Bae \& Miljkovic~\cite{bami2024} and Denuit~\cite{de2019, de2020}).  \smallskip

The simulated results presented below are intended solely to illustrate the applicability of our method, rather than to assess the optimal choice of its parameters. Accordingly, the change of measure parameters were selected heuristically. To evaluate the numerical results, we use  the  \emph{absolute relative error} (ARE) and the \emph{relative standard error} (RSE) metrics, respectively given by
 \[
 \text{ARE}:=\left|\frac{\psi(u)-\Psi_K(u)}{\psi(u)} \right|,
 \]
 and
 \[
 \text{RSE}:=\text{RSE}\big(Q^{(\xi)}\big)=\frac{\sqrt{\mathbb{V}ar_{Q^{(\xi)}}[\Psi_K(u)]}}{\E_{Q^{(\xi)}}[\Psi_K(u)]}.
 \]
Additional diagnostics, including the \textit{effective sample size} and the \textit{maximum normalized weight}, were also monitored and led to conclusions consistent with the reported RSEs, but are not displayed for brevity.\smallskip 
 
 \textit{Throughout what follows,  the premium rate $c$ is of the form $c{\,=\,}(1+\eta)\cdot\frac{\E_P[X_1]}{\E_P[W_1]}$, where $\eta{\,\in\,}(0,\infty)$ is the safety loading.} \smallskip 

In our first example, we consider the case where the claim sizes are distributed according to the \textit{generalized gamma} law (abbreviated as \textbf{GGa}). Recall that a real-valued random variable $Y$ on $(\vO,\vS)$  follows the \textbf{GGa} distribution with parameters \((\al,b,p){\,\in\,}\R\setminus\{0\}{\,\times\,}(0,\infty)^2\) if
\[
P_Y(B)=\int_B\frac{|\al|\cdot x^{\al\cdot p-1}}{b^{\al\cdot p}\cdot\Gamma(p)}\cdot e^{-(x/b)^\al}\,\lambda(dx)\,\,\text{ for all } B\in\B(0,\infty),
\]
where $\Gamma{\,:\,}(0,\infty){\,\to\,}(0,\infty)$ is the \textit{gamma function}. Recall also that
\[
\E[Y^k]=\frac{b^k\cdot \Gamma(p+k/\al)}{\Gamma(p)}
\]
for all real numbers $k$ satisfying $p+\tfrac{k}{\al}{\,>\,}0$ (compare, e.g., Kleiber \& Kotz~\cite{klko03}, p.~151). The \textbf{GGa} distribution encompasses as special cases many well-known distributions used in actuarial practice including, but not limited to, the \textit{gamma} distribution ($\textbf{Ga}(p,b){\,=\,}\textbf{GGa}(1,1/b,p)$), the \textit{inverse gamma} distribution ($\textbf{InvGa}(p,b){\,=\,}\textbf{GGa}(-1,b,p)$), the \textit{Weibull} distribution ($\textbf{Wei}(\al,b){\,=\,}\textbf{GGa}(\al,b,1)$ for $\al{\,>\,}0$), and the \textit{inverse Weibull} distribution ($\textbf{InvWei}(\al,b){\,=\,}\textbf{GGa}(-\al,b,1)$ for $\al{\,>\,}0$). 
	
\begin{sex}\label{mmm1}\normalfont	
	
 Assume that $P_{X_1}{\,=\,}\textbf{GGa}(\al,b,p)$, where \((\al,b,p){\,\in\,}\R\setminus\{0\}\times (0,\infty)^2\) so that $p+2/\al{\,>\,}0$, and take $\xi{\,\in\,}(-\infty,\wh\xi]$, where 
		\[
		\wh\xi = \frac{\beta\cdot b\cdot \Gamma(\wt{p})-c\cdot \Gamma(p)}{\beta\cdot b^2\cdot \Gamma(p+2/\al)}\in(-\infty,0) 
		\]
		with $\wt{p}:=p+1/\al$.
		According to condition~\eqref{mmmcsd}, the claim size distribution (under $Q^{(\xi)}$) is given by the mixture
		\begin{align*}
			Q^{(\xi)}_{X_1}(B)&=\frac{\Gamma(p)}{\Gamma(p)-\xi\cdot b\cdot \Gamma(\wt{p})}\cdot \underbrace{\int_B \frac{|\al|\cdot x^{\al\cdot p-1}}{b^{\al\cdot p}\cdot\Gamma(p)}\cdot e^{-(x/b)^\al}\,\lambda(dx)}_{\textbf{GGa}(\al,b,p)}-\frac{\xi\cdot b\cdot \Gamma(\wt{p})}{\Gamma(p)-\xi\cdot b\cdot \Gamma(\wt{p})}\cdot \underbrace{\int_B \frac{|\al|\cdot x^{\al\cdot \wt{p}-1}}{b^{\al\cdot \wt{p}}\cdot\Gamma(\wt{p})}\cdot e^{-(x/b)^\al}\,\lambda(dx)}_{\textbf{GGa}(\al,b,\wt{p})}
		\end{align*}
		for every $B{\,\in\,}\B(0,\infty)$,  while for the claim interarrival times distribution we have  
		\[
		Q^{(\xi)}_{W_1} = \textbf{Exp}(\beta_\xi)\text{, where }
		\be_\xi:=\be\cdot\big(1- \xi\cdot b\cdot\tfrac{\Gamma(\wt{p})}{\Gamma(p)}\big)\in(0,\infty).
		\] 
		
Table~\ref{tableexp} presents simulated values of  $\Psi_{K}(u)$ (\(K{\,=\,}10^5\)) and the corresponding evaluation metrics (ARE and RSE) in the case of exponentially distributed claim sizes. To this purpose, take $\al{\,=\,}p{\,=\,}1$ and $b{\,=\,}1/\theta$, where $\theta{\,\in\,}(0,\infty)$. This implies that $P_{X_1}{\,=\,}\textbf{GGa}(1,1/\theta,1){\,=\,}\textbf{Exp}(\theta)$. For every real number 
		\[
		\xi \leq \wh\xi = \frac{\theta\cdot\beta-c\cdot\theta^2}{2\cdot\beta}<0,
		\]
		we get  that the distribution of the claim sizes under the new measure is given by the mixture
		\[ 
		Q^{(\xi)}_{X_1}(B)=\frac{\theta}{\theta-\xi}\cdot\underbrace{\int_B  \theta\cdot e^{-\theta\cdot x}\,\lambda(dx)}_{\textbf{Exp}(\theta)}-\frac{\xi}{\theta-\xi}\cdot \underbrace{\int_B \theta^2\cdot x\cdot e^{- \theta\cdot x}\,\lambda(dx)}_{\textbf{Ga}(2,\theta)}\,\text{ for all } B\in\B(0,\infty),
		\]
		while the distribution of the interarrival times becomes $Q^{(\xi)}_{W_1}{\,=\,}\textbf{Exp}(\beta_\xi)$, where $\beta_\xi{\,=\,}\beta\cdot(1-\xi/\theta){\,\in\,}(0,\infty)$. As in this case, the ruin probability satisfies
		\[
		\psi(u)=\frac{\beta}{\theta\cdot c}\cdot e^{-(\theta-\frac{\beta}{c})\cdot u}\,\,\text{ for every } u\in\R_+
		\]
		(compare  Rolski et al.~\cite{rss}, Corollary 6.5.3), this example provides a natural numerical benchmark. 
		\begin{table}[H]
			\captionsetup{format=plain, font=footnotesize, labelfont=bf, justification=raggedright, margin=0cm}\captionsetup{format=plain, font=footnotesize, labelfont=bf, justification=raggedright, margin=0cm}
			\caption{Comparison of the exact values for \(\psi(u)\) and the simulated values of \(\Psi_K(u)\) for exponentially distributed claim sizes with $\beta{\,=\,}\theta{\,=\,}1$, $\eta{\,=\,}1/2$, and \(\xi{\,=\,} 1.95\cdot\wh{\xi}\).
			} 
			\label{tableexp}
			\centering
			\vspace{0pt}
			\centering
			\begin{tabular}{c|c|c|c|c}
				\hline 
				$u$ & $\psi(u)$ & \(\Psi_K(u)\) & \(\text{ARE}\%\) & \(\text{RSE}\%\) \\
				\Xhline{1pt}
0 & \num{6.667e-01} & \num{6.667e-01} & 0.003\% & 0.091\% \\ 
1 & \num{4.777e-01} & \num{4.777e-01} & 0.007\% & 0.102\% \\ 
2 & \num{3.423e-01} & \num{3.419e-01} & 0.122\% & 0.119\% \\ 
3 & \num{2.453e-01} & \num{2.456e-01} & 0.159\% & 0.136\% \\ 
4 & \num{1.757e-01} & \num{1.759e-01} & 0.119\% & 0.155\% \\ 
5 & \num{1.259e-01} & \num{1.260e-01} & 0.098\% & 0.173\% \\ 
10 & \num{2.378e-02} & \num{2.393e-02} & 0.606\% & 0.254\% \\ 
20 & \num{8.484e-04} & \num{8.508e-04} & 0.280\% & 0.406\% \\ 
30 & \num{3.027e-05} & \num{3.036e-05} & 0.317\% & 0.556\% 
			\end{tabular}
		\end{table}
		
		Table~\ref{tableexp} indicates that the   estimates align with the exact values of \(\psi(u)\) across the considered reserve levels, with AREs below \(0.7\%\) even for probabilities of order \(10^{-5}\).\,The RSEs remain  small and increase only moderately with \(u\). These results establish an empirical baseline before applying the scheme to other members of the \(\mathbf{GGa}\) family, where exact formulas for \(\psi(u)\) are not available.\smallskip

The simulated values of $\Psi_{K}(u)$ (\(K{\,=\,}10^5\)) and the corresponding RSEs for several special cases of the \textbf{GGa} family are presented in Table~\ref{tablegga}, including both light- and heavy-tailed distributions. Note that these distributions are parametrized to have similar expectations $\E_P[X_1]{\,=\,}2$.
\begin{table}[H]
\captionsetup{format=plain, font=footnotesize, labelfont=bf, justification=raggedright, margin=0cm}
\caption{The simulated values of $\Psi_{K}(u)$ for \(\be{\,=\,}1\), \(\eta{\,=\,}1/2\)  and \(\xi{\,=\,} 1.95\cdot\wh{\xi}\) for some well-known members of the {\bf GGa} family with \(\E_P[X_1]{\,=\,}2\).}
\label{tablegga}
\centering
\sisetup{
scientific-notation = true,
table-format = 1.3e-2
}
\begin{tabular}{c|c|c|c|c}
\hline 
$u$ & {\textbf{Ga}(2,1)} & {\textbf{Wei}(3/4,1.68)} & {\textbf{InvGa}(3,4)} & {\textbf{InvWei}(3,1.48)} \\
\Xhline{1pt}
0 & \num{6.668e-01} {\footnotesize (0.083\%)} & \num{6.667e-01} {\footnotesize (0.101\%)}& 
\num{6.660e-01} {\footnotesize (0.170\%)}
& \num{6.654e-01} {\footnotesize (0.107\%)} \\ 
1 & \num{5.488e-01} {\footnotesize (0.086\%)} & \num{5.786e-01} {\footnotesize (0.106\%)} & 
\num{5.414e-01} {\footnotesize (0.206\%)}
& \num{5.342e-01} {\footnotesize (0.131\%)} \\ 
2 & \num{4.401e-01} {\footnotesize (0.095\%)} & \num{5.088e-01} {\footnotesize (0.113\%)} & 
\num{4.348e-01} {\footnotesize (0.253\%)}
& \num{4.022e-01} {\footnotesize (0.156\%)} \\ 
3 & \num{3.500e-01} {\footnotesize (0.106\%)} & \num{4.505e-01} {\footnotesize (0.120\%)} & 
\num{3.553e-01} {\footnotesize (0.304\%)}
& \num{3.101e-01} {\footnotesize (0.202\%)} \\ 
4 & \num{2.777e-01} {\footnotesize (0.117\%)} & \num{3.999e-01} {\footnotesize (0.128\%)} & 
\num{2.961e-01} {\footnotesize (0.640\%)}
& \num{2.431e-01} {\footnotesize (0.532\%)} \\ 
5 & \num{2.201e-01} {\footnotesize (0.127\%)} & \num{3.559e-01} {\footnotesize (0.135\%)} & 
\num{2.476e-01} {\footnotesize (0.517\%)}
& \num{1.906e-01} {\footnotesize (0.346\%)} \\ 
10 & \num{6.898e-02} {\footnotesize (0.171\%)} & \num{2.028e-01} {\footnotesize (0.175\%)} & 
\num{1.117e-01} {\footnotesize (0.576\%)}
& \num{6.489e-02} {\footnotesize (1.095\%)} \\ 
15 & \num{2.157e-02} {\footnotesize (0.211\%)} & \num{1.172e-01} {\footnotesize (0.216\%)} & 
\num{5.752e-02} {\footnotesize (1.787\%)}
& \num{2.602e-02} {\footnotesize (3.855\%)} \\ 
20 & \num{6.740e-03} {\footnotesize (0.246\%)} & \num{6.803e-02} {\footnotesize (0.255\%)} 
& \num{3.136e-02} {\footnotesize (1.428\%)}
& \num{1.134e-02} {\footnotesize (2.623\%)} \\
30 & \num{6.598e-04} {\footnotesize (0.315\%)}& \num{2.303e-02} {\footnotesize (0.353\%)} & 
\num{1.145e-02} {\footnotesize (2.627\%)}
& \num{3.229e-03} {\footnotesize (4.917\%)} \\ 
40 & \num{6.465e-05} {\footnotesize (0.384\%)}& \num{7.757e-03} {\footnotesize (0.441\%)} & 
\num{5.486e-03} {\footnotesize (5.038\%)}
& \num{1.175e-03} {\footnotesize (4.404\%)} \\ 
50 & \num{6.338e-06} {\footnotesize (0.456\%)} & \num{2.633e-03} {\footnotesize (0.550\%)} & 
\num{3.898e-03} {\footnotesize (22.290\%)}
&\num{5.778e-04} {\footnotesize (6.434\%)} 
\end{tabular}
\end{table}
 
In Table~\ref{tablegga}, we first observe an alignment between the simulated values at \(u{\,=\,}0\) and the theoretical value \(\psi(0){\,=\,}2/3\). For the \textbf{Ga} and \textbf{Wei} claim size distributions, \(\Psi_K(u)\) exhibits low RSEs over a wide range of initial reserves, indicating stable variance behaviour. In contrast, for the \textbf{InvGa} and \textbf{InvWei} distributions, the RSEs become unstable as \(u\) increases. This deterioration appears to be mainly due to the finite number of moments admitted by these distributions, rather than from their heavy-tailed nature alone (see also Example~\ref{PH1} below). 
\smallskip

Importantly, this deterioration in precision does not appear to be a limitation of the proposed method. For instance, in the \textbf{InvGa} case, by setting \(\xi {\,=\,} 2.14\cdot\wh{\xi}\), the RSE for \(u {\,=\,} 50\)  is reduced from \(22.29\%\) to \(4.695\%\). This suggests that the behaviour of the estimator may depend on \(u\). Consequently, a single fixed parameter choice may not yield uniformly efficient estimates across all reserve levels, particularly for claim size distributions with only finitely many moments. However, a systematic study of  adaptive parameter selection schemes is beyond the scope of this paper. 
\end{sex}  

Next, we consider the case of log-normally distributed claim sizes.  To this purpose, recall that a real-valued random variable $Y$ on $(\vO,\vS)$ follows the \textit{log-normal} distribution with parameters $\mu{\,\in\,}\R$ and $\sigma{\,\in\,}(0,\infty)$ ($\textbf{LN}(\mu,\sigma)$ for short) if
\[
P_{Y}(B)=\int_B \frac{1}{x}\cdot\frac{1}{\sigma\cdot\sqrt{2\cdot\pi}}\cdot e^{-\frac{(\ln x-\mu)^2}{2\cdot\sigma^2}}\,\lambda(dx)\,\text{ for every } B\in\B(0,\infty).
\]
The moments of the log-normal distribution are given by
\(
\E_P[Y^k]=e^{\mu\cdot k+\frac{\sigma^2\cdot k^2}{2}}
\)
for all real numbers $k{\,>\,}0$.  Note that in this case the standard simulation of $\psi(u)$ via the PK formula is challenging, since the distribution function of $\textbf{LN}(\mu,\sigma)$ lacks a closed analytical expression (see Juneja \& Shahabuddin~\cite{jusha02}, p.~112).  

\begin{sex}\label{mmm2}\normalfont	
 Assume that $P_{X_1}{\,=\,}\textbf{LN}(\mu,\sigma)$ for some  $\mu{\,\in\,}\R$ and $\sigma{\,\in\,}(0,\infty)$, and let $\xi{\,\in\,}(-\infty,\wh{\xi}]$, where  
		\[
		\wh\xi=\frac{\beta-c\cdot e^{-\mu-\frac{\sigma^2}{2}}}{\beta}\cdot e^{-\mu-\frac{3\cdot\sigma^2}{2}}\in(-\infty,0). 
		\]
		For this particular case, we get  by condition~\eqref{mmmcsd} that the claim sizes are distributed according to the  mixture 
		\begin{align*}
			Q^{(\xi)}_{X_1}(B)&=\frac{1}{1-\xi\cdot e^{\mu+\frac{\sigma^2}{2}}}\cdot\underbrace{\int_B \frac{1}{x}\cdot\frac{1}{\sigma\cdot\sqrt{2\cdot\pi}}\cdot e^{-\frac{(\ln x-\mu)^2}{2\cdot\sigma^2}}\,\lambda(dx)}_{\textbf{LN}(\mu,\sigma)} - \frac{\xi\cdot e^{\mu+\frac{\sigma^2}{2}}}{1-\xi\cdot e^{\mu+\frac{\sigma^2}{2}}}\cdot\underbrace{\int_B\frac{1}{x}\cdot\frac{1}{\sigma\cdot\sqrt{2\cdot\pi}}\cdot e^{-\frac{(\ln x-\wt{\mu})^2}{2\cdot\sigma^2}}\,\lambda(dx)}_{\textbf{LN}(\wt{\mu},\sigma)} 
		\end{align*}
		for every $B{\,\in\,}\B(0,\infty)$, where $\wt{\mu}{\,:=\,}\mu+\sigma^2$. Furthermore, for the claim interarrival times we get that 
		\[Q^{(\xi)}_{W_1}=\textbf{Exp}(\beta_\xi)\text{, where } 
		\beta_\xi=\beta\cdot\big(1-\xi\cdot e^{\mu+\frac{\sigma^2}{2}}\big)\in(0,\infty).
		\]
		 
 In Table~\ref{tableln}, we present simulated values of  $\Psi_{K}(u)$ (\(K{\,=\,}10^5\))   and the corresponding RSEs  for the case of log-normally distributed claim sizes.  
\begin{table}[H] 
\captionsetup{format=plain, font=footnotesize, labelfont=bf, justification=raggedright, margin=0cm}
\caption{The simulated values of $\Psi_{K}(u)$ for log-normally distributed claim sizes with \(\be{\,=\,}1\), \(\eta{\,=\,}1/2\), \(\mu{\,=\,}0\), \(\sigma{\,\in\,}\{1/2,1,3/2\}\), and \(\xi{\,=\,} 1.95\cdot\wh{\xi}\).}
\label{tableln}
\centering
\sisetup{
scientific-notation = true,
table-format = 1.3e-2
}
\begin{tabular}{c | c|c|c}
\hline 
$u$ & {\(\textbf{LN}(0,1/2)\)} & {\(\textbf{LN}(0,1)\)} & {\(\textbf{LN}(0,3/2)\)}\\
\Xhline{1pt}
0  &  \num{6.664e-1} {\footnotesize (0.080\%)} & \num{6.661e-1} {\footnotesize (0.123\%)} & \num{6.663e-1}  {\footnotesize (0.218\%)} \\ 
1  & \num{4.377e-1}  {\footnotesize (0.094\%)} & \num{5.419e-1}  {\footnotesize (0.142\%)} & \num{6.106e-1}  {\footnotesize (0.237\%)} \\  
2  & \num{2.690e-1} {\footnotesize (0.116\%)} & \num{4.518e-1}  {\footnotesize (0.168\%)} & \num{5.724e-1 } {\footnotesize (0.256\%)} \\ 
3  & \num{1.656e-1}  {\footnotesize (0.138\%)} & \num{3.831e-1}  {\footnotesize (0.193\%)} & \num{5.414e-1}  {\footnotesize (0.271\%)} \\ 
4  & \num{1.021e-1}  {\footnotesize (0.159\%)} & \num{3.281e-1}  {\footnotesize (0.217\%)} & \num{5.163e-1}  {\footnotesize (0.284\%)} \\ 
5  & \num{6.292e-2} {\footnotesize (0.178\%)} & \num{2.834e-1} {\footnotesize (0.225\%)} & \num{4.943e-1}  {\footnotesize (0.301\%)} \\ 
10 & \num{5.646e-3}  {\footnotesize (0.271\%)} & \num{1.443e-1}  {\footnotesize (0.375\%)} & \num{4.091e-1}  {\footnotesize (0.384\%)} \\ 
15 & \num{5.076e-4}  {\footnotesize (0.378\%)} & \num{7.881e-2}  {\footnotesize (0.579\%)} & \num{3.505e-1}  {\footnotesize (0.437\%)} \\ 
20 & \num{4.541e-5}  {\footnotesize (0.449\%)} & \num{4.512e-2}  {\footnotesize (0.958\%)} & \num{3.051e-1}  {\footnotesize (0.491\%)} \\ 
30 & \num{3.694e-7}  {\footnotesize (0.690\%)} & \num{1.606e-2} {\footnotesize (1.694\%)} & \num{2.387e-1}  {\footnotesize (0.433\%)} \\ 
40 & \num{2.933e-9}  {\footnotesize (0.861\%)} & \num{6.164e-3}  {\footnotesize (1.883\%)} & \num{1.926e-1}  {\footnotesize (0.466\%)} \\ 
50 & \num{2.390e-11} {\footnotesize (1.245\%)} & \num{2.509e-3}  {\footnotesize (2.277\%)} & \num{1.593e-1}  {\footnotesize (0.570\%)} 
\end{tabular}
\end{table}

In Table~\ref{tableln}, we again observe a close agreement between the simulated values at \(u{\,=\,} 0\) and the exact value \(\psi(0){\,=\,} 2/3\). Furthermore, \(\Psi_K(u)\) exhibits consistently low RSEs across all values of \(u\) considered, indicating stable behaviour of the estimator.\,This effect is most clearly observed for \({\bf LN}(0,1/2)\), where the RSEs remain uniformly small and increase only very mildly with \(u\), even for probabilities of order \(10^{-11}\).\,While a more noticeable increase in RSEs is observed for \({\bf LN}(0,1)\), this increase remains moderate, and the overall behaviour is still stable.
\end{sex}

	\begin{srem}\label{remmmm}\normalfont 
		If we choose $\xi{\,=\,}\wh\xi$, then the restriction $\wh{Q}_T$ of the probability measure $\wh{Q}{\,:=\,}Q^{(\wh\xi)}$ to $\F_T$, where $T{\,<\,}\infty$ is a fixed time horizon,  coincides with the \textit{minimal  martingale measure} (MMM) for CPPs (see M\o{}ller~\cite{moller04}, Section~4.2) for the price process $\{u-R^u_t\}_{t\in[0,T]}$. Under the MMM, the  no-arbitrage  premium is  
		\[
		\wh{c}:=c(\wh{\xi})=\beta\cdot\big(\E_P[X_1]-\wh{\xi}\cdot\E_P[X_1^2]\big).
		\] 
	\end{srem}

\subsection{The hazard rate twisting approach}\label{PH}

Although the approach presented in the previous section provides a fairly general method for simulating ruin probabilities in the Cram\'er-Lundberg risk model, the assumption $\E_P[X^2_1]{\,<\,}\infty$ excludes cases of particular interest in insurance mathematics (e.g., a Pareto distribution with shape parameter in $(1,2]$). In addition,  this method cannot be extended in a straightforward manner to the Sparre Andersen model, as in this case the measure $\widehat{Q}_T$ fails, in general, to be an EMM for the price process $\{u-R^u_t\}_{t\in[0,T]}$. These limitations motivate us to consider the well-known \textit{proportional hazard} (PH) transform applied to $P_{X_1}$ and/or $P_{W_1}$ (see Wang~\cite{wa}), in order to increase the weight assigned to unfavourable events. In the classical Cram\'{e}r-Lundberg setting,  the PH-transform approach has been considered in Juneja \& Shahabuddin~\cite{jusha02} under the name \textit{hazard rate twisting}, where it is used to modify (only) the claim size distribution and construct IS  estimators for $\psi(u)$ based on the PK formula.\smallskip 

 Recall that the \textit{hazard rate function} $h_Y$ of a positive real-valued random variable $Y$ on $\vO$ with $P_Y{\,\ll\,}\lambda$  is defined as  
\[
h_Y(x):=\frac{f_Y(x)}{1-F_Y(x)}\,\text{ for every } x\in (0,\infty),
\]
where $F_Y$ denotes the probability distribution function of \(Y\) and $f_Y{\,:=\,}dP_Y/d\lambda$ is the corresponding probability density function (cf., e.g., Rolski et al.~\cite{rss}, Section 2.4.2). For every \(x{\,\in\,}(0,\infty)\) define 
\[
H_Y(x) := \int_0^x h_Y(t)\,\lambda(dt).
\]

Let $t,u{\,\in\,}\R_+$ be fixed but arbitrary. For every $(r,\theta){\,\in\,}(0,\infty)^2$ consider the \(\B(0,\infty)\)-measurable functions 
\[
\ga_r(x):= \ln r-(r-1)\cdot H_{X_1}(x),\,\, \text{ for every } x\in(0,\infty), 
\]
and 
\[
\delta_\theta(w):= \ln \theta-(\theta-1)\cdot H_{W_1}(w)\,\, \text{ for all } w\in (0,\infty) .
\]
Clearly, \((\ga_r,\delta_\theta){\,\in\,}\hyperlink{fp}{\F_P}{\,\times\,}\hyperlink{gp}{\G_P}\) for all \((r,\theta){\,\in\,}(0,\infty)^2\). Assume  that there exists a set $A_{X_1,W_1}{\,\subseteq\,}(0,\infty)^2$ such that  \((\ga_r,\delta_\theta){\,\in\,}\hyperlink{rp}{\C_P}\) for every \((r,\theta){\,\in\,} A_{X_1,W_1}\).  Thus, for any chosen \((r,\theta){\,\in\,} A_{X_1,W_1}\), we can apply Theorem~\ref{gform}(ii) to obtain a unique probability measure $Q^{(r,\theta)}{\,:=\,}Q^{(\ga_r,\delta_\theta)}{\,\in\,}\hyperlink{rms}{\mathcal{R}_S}$ determined by condition~\eqref{mq}, namely
\[
Q^{(r,\theta)}(A)=\E_P\Big[\1_A\cdot r^{N_t}\cdot\theta^{N_{t}+1}\cdot e^{-(r-1)\cdot\sum_{j=1}^{N_t} H_{X_1}(X_j)-(\theta-1)\cdot\sum_{j=1}^{N_t+1} H_{W_1}(W_j)}\Big] \,\,\text{ for all } A\in\H_t,
\]
and satisfying condition \eqref{mart1}. In particular, the ruin probability \(\psi(u)\) admits the representation \eqref{ruin} with  
\begin{gather}
	\psi(u)=\E_{Q^{(r,\theta)}}\Big[(r\cdot\theta)^{-N_{\tau_u}}\cdot e^{(r-1)\cdot\sum_{j=1}^{N_{\tau_u}} H_{X_1}(X_j)+(\theta-1)\cdot\sum_{j=1}^{N_{\tau_u}} H_{W_1}(W_j)}\Big].
	\label{ruinph}
\end{gather}
By Remark~\ref{remp}(a) we have that 
\begin{gather}
	Q^{(r,\theta)}_{X_1}(B)=\int_B r\cdot h_{X_1}(x)\cdot e^{-r\cdot H_{X_1}(x)}\,\lambda(dx)\,\text{ for every } B\in\B(0,\infty),
	\label{csdph}
\end{gather}
and
\begin{gather}
	Q^{(r,\theta)}_{W_1}(F)=\int_F \theta\cdot h_{W_1}(w)\cdot e^{-\theta\cdot H_{W_1}(w)}\,\lambda(dw)\,\text{ for all } F\in\B(0,\infty),
	\label{citdph}
\end{gather}
respectively.\smallskip

In the next example, we consider the case of Pareto distributed claim sizes.  To this purpose, recall that a real-valued random variable $Y$ on $(\vO,\vS)$ follows the \textit{Pareto} distribution with parameters $a,b{\,\in\,}(0,\infty)$ ($\textbf{Pa}(a,b)$ for short) if
\[
P_{Y}(B)=\int_B \frac{a\cdot b^a}{(b+x)^{a-1}}\,\lambda(dx)\,\text{ for every } B\in\B(0,\infty).
\]

\begin{sex}\label{PH1}
\normalfont

Let  $P_{W_1}{\,=\,}\textbf{Wei}(\alpha,\beta)$ and \(P_{X_1}{\,=\,}\textbf{Pa}(a,b)\), where $\al,\be, b{\,\in\,}(0,\infty)$ and  \(a{\,\in\,}(1,\infty)\). Then
\[
h_{W_1}(w)=\alpha\cdot\be^{-\al}\cdot w^{\al-1} \,\,\text{ and }\,\, H_{W_1}(w)=\be^{-\alpha}\cdot w^\alpha
\]
for all $w{\,\in\,}(0,\infty)$, and 
\[
h_{X_1}(x)=\frac{a}{b+x}\,\,\text{ and }\,\, H_{X_1}(x)=a\cdot\ln\Big(1+\frac{x}{b}\Big) 
\]
for every $x{\,\in\,}(0,\infty)$.  It then follows that 
\begin{align*}
\E_P\big[W_1\cdot e^{\delta_\theta(W_1)}\big] 
&=\int_0^\infty w\cdot\alpha\cdot\big(\be\cdot\theta^{-1/\alpha}\big)^{-\al}\cdot w^{\al-1}\cdot e^{-\big(\be\cdot\theta^{-1/\alpha}\big)^{-\al}\cdot w^\alpha}\,\lambda(dw) =\theta^{-1/\alpha}\cdot\be\cdot \Gamma(1+1/\alpha)<\infty
\end{align*}
for any $\theta{\,\in\,}(0,\infty)$, and 
\[
\E_P\big[X_1\cdot e^{\ga_r(X_1)}\big]=\int_0^\infty x\cdot \frac{a\cdot r}{b+x}\cdot e^{-a\cdot r\cdot\ln(1+\frac{x}{b})}\,\lambda(dx)=\int_0^\infty x\cdot \frac{a\cdot r\cdot b}{(b+x)^{a\cdot r+1}}\,\lambda(dx)=\frac{b}{a\cdot r-1}<\infty
\]
for every \(r{\,\in\,}(1/a,\infty)\); hence  $(\ga_r,\delta_\theta){\,\in\,}\hyperlink{fp1}{\F^1_P}{\,\times\,}\hyperlink{gp1}{\G^1_P}$ for all $(r,\theta){\,\in\,}(1/a,\infty){\,\times\,}(0,\infty)$. Solving the inequality 
\[
c\cdot \E_P\big[W_1\cdot e^{\delta_\theta(W_1)}\big]\leq\E_P\big[X_1\cdot e^{\ga_r(X_1)}\big]
\]
yields that \((\ga_r,\delta_\theta){\,\in\,}\hyperlink{rp}{\C_P}\) if and only if \((r,\theta){\,\in\,} A_{X_1,W_1}\), where 
\[
A_{X_1,W_1}=\big\{(r,\theta)\in (1/a,\infty)\times(0,\infty) : r\leq r_M(\theta)\big\}=\big\{(r,\theta)\in (1/a,\infty)\times(0,\infty) : \theta\geq \theta_m(r)\big\}
\]
with
\[
r_M(\theta):=\frac{1}{a}\cdot \Big(1+\frac{b\cdot\theta^{1/\alpha}}{c\cdot\E_P[W_1]}\Big)\,\,\text{ for every } \theta\in(0,\infty),
\]
and 
\[
\theta_m(r):=\frac{c^\al}{b^\al}\cdot\big((a\cdot r-1)\cdot\E_P[W_1]\big)^\alpha\,\,\text{ for every } r\in(1/a,\infty). 
\]
Thus, for every $(r,\theta){\,\in\,}A_{X_1,W_1}$  we can define the probability measure
\[
Q^{(r,\theta)}(A)=\E_P\Big[\1_A\cdot r^{N_{t}}\cdot \theta^{N_{t}+1}\cdot e^{-(\theta-1)\cdot \beta^{-\al}\cdot\sum_{j=1}^{N_t+1}W_j^{\alpha}}\cdot\prod_{j=1}^{N_{t}}\Big(1+\frac{X_j}{b}\Big)^{-a\cdot(r-1)} \Big]\,\,\text{ for all } A\in\H_t.
\] 
Condition~\eqref{ruinph} can be rewritten as
\[
\psi(u)=\E_{Q^{(r,\theta)}}\Big[(r\cdot\theta)^{-N_{\tau_u}}\cdot e^{(\theta-1)\cdot \beta^{-\al}\cdot\sum_{j=1}^{N_{\tau_u}}W_j^{\alpha}}\cdot\prod_{j=1}^{N_{\tau_u}}\Big(1+\frac{X_j}{b}\Big)^{a\cdot(r-1)} \Big],
\]
while conditions~\eqref{csdph} and~\eqref{citdph} yield $Q^{(r,\theta)}_{X_1}{\,=\,}\textbf{Pa}(a\cdot r,b)$ and $Q^{(r,\theta)}_{W_1}{\,=\,}\textbf{Wei}(\al,\be\cdot\theta^{-1/\al})$, respectively.  In Table~\ref{tablepawe}, we present the simulated values of  $\Psi_{K}(u)$ (\(K{\,=\,}10^5\))  and the corresponding RSEs.
\begin{table}[H] 
	\captionsetup{format=plain, font=footnotesize, labelfont=bf, justification=raggedright, margin=0cm}
	\caption{The simulated values of $\Psi_{K}(u)$ for \(P_{X_1}{\,=\,}\textbf{Pa}(a,3)\), \(a{\,\in\,}\{3/2,2,5/2\}\),    \(P_{W_1}{\,=\,}\textbf{Wei}(0.375,1/2)\), $\eta{\,=\,}1/2$,   \(\theta{\,=\,}1.2\), and \(r{\,=\,}0.95\cdot r_M(\theta)\). }
	\label{tablepawe}
	\centering
	\sisetup{
		scientific-notation = true,
		table-format = 1.3e-2
	}  
	\begin{tabular}{c | c|c|c}
\hline  
$u$ & {\(\textbf{Pa}(3/2,3)\)} & {\(\textbf{Pa}(2,3)\)} & {\(\textbf{Pa}(5/2,3)\)}\\
\Xhline{1pt}
0  &  \num{8.881e-01} {\footnotesize (0.163\%)} & \num{9.125e-01} {\footnotesize (0.145\%)} & \num{9.177e-01} {\footnotesize (0.116\%)} \\ 
5  & \num{7.845e-01} {\footnotesize (0.314\%)} & \num{7.807e-01} {\footnotesize (0.371\%)} & \num{7.504e-01} {\footnotesize (0.360\%)} \\  
10  & \num{7.190e-01} {\footnotesize (0.367\%)}  & \num{6.947e-01} {\footnotesize (0.422\%)} & \num{6.380e-01} {\footnotesize (0.451\%)} \\ 
15  & \num{6.701e-01} {\footnotesize (0.395\%)}  & \num{6.257e-01} {\footnotesize (0.458\%)} & \num{5.482e-01} {\footnotesize (0.510\%)} \\ 
20  & \num{6.287e-01} {\footnotesize (0.422\%)} & \num{5.669e-01} {\footnotesize (0.498\%)} & \num{4.736e-01} {\footnotesize (0.530\%)} \\ 
30  & \num{5.649e-01} {\footnotesize (0.505\%)} & \num{4.766e-01} {\footnotesize (0.693\%)} & \num{3.600e-01} {\footnotesize (0.715\%)} \\ 
40 & \num{5.160e-01} {\footnotesize (0.684\%)} & \num{4.067e-01} {\footnotesize (0.994\%)} & \num{2.755e-01} {\footnotesize (0.783\%)} \\ 
50 & \num{4.738e-01} {\footnotesize (0.723\%)} & \num{3.463e-01} {\footnotesize (0.860\%)} & \num{2.133e-01} {\footnotesize (0.951\%)} \\ 
100 & \num{3.426e-01} {\footnotesize (1.213\%)}  & \num{1.776e-01} {\footnotesize (1.906\%)} & \num{6.643e-02} {\footnotesize (2.165\%)} \\ 
150 & \num{2.625e-01} {\footnotesize (1.488\%)} & \num{1.020e-01} {\footnotesize (3.052\%)} & \num{2.317e-02} {\footnotesize (3.718\%)} \\ 
200 & \num{2.167e-01} {\footnotesize (2.065\%)} & \num{6.308e-02} {\footnotesize (3.234\%)} & \num{9.804e-03} {\footnotesize (4.435\%)} \\ 
250 & \num{1.792e-01} {\footnotesize (1.933\%)} & \num{4.509e-02} {\footnotesize (6.408\%)} & \num{5.798e-03} {\footnotesize (14.482\%)}
\end{tabular}
\end{table}

In Table~\ref{tablepawe}, we observe that the parameter $a$ significantly affects the decay rate of the ruin probability. Higher values of $a$ (which correspond to lighter tails in the claim size distribution) lead to a faster reduction in ruin probability as $u$ increases. For $u{\,=\,}250$, the probability for $a{\,=\,}5/2$ is approximately $31$ times smaller than for $a{\,=\,}3/2$. The RSEs indicate that the simulation is accurate for small  values of $u$. However, as $u$ increases and $\Psi_{K}(u)$ becomes smaller, the relative error increases, reaching $14.482\%$ in the most extreme case ($a{\,=\,}5/2, u{\,=\,}250$), which is possibly due to the selected parameter values. In fact, if we choose \(\theta{\,=\,}1.1\) and \(r{\,=\,}0.875\cdot r_M(\theta)\), then the RSE reduces from $14.482\%$ to \(5.758\%\). This, together with Example~\ref{mmm1}, reinforces the intuition that, especially for heavy-tailed distributions with a finite number of moments, the selection of the parameters should depend on the level of the initial reserve.
\end{sex}

 In some cases, a closed-form expression for the hazard function is not available (e.g. the \textbf{Ga} or the \textbf{LN} distributions). However, the general method presented above is still applicable as long as at least one of \(h_{X_1}\) or \(h_{W_1}\) is available in closed form, since one may twist only one component while leaving the  other unchanged (taking \(r{\,=\,}1\) or \(\theta{\,=\,}1\)).

\begin{sex}\label{PH2}
	\normalfont		

 Assume that \(P_{W_1}{\,=\,}\textbf{Ga}(\al,\beta)\) and \(P_{X_1}{\,=\,}\textbf{Exp}(\zeta)\), where \(\al,\be,\zeta{\,\in\,}(0,\infty)\), and take \((r,\theta){\,\in\,}(0,\infty){\,\times\,}\{1\}\). This implies that \(\delta_\theta(w){\,=\,}\delta_1(w){\,=\,}0\) for all \(w{\,\in\,}(0,\infty)\), and thus we change only the claim size distribution. Since  \(P_{X_1}{\,=\,}\textbf{Exp}(\zeta)\), we have
		\[
		h_{X_1}(x)=\zeta \,\,\text{ and }\,\, H_{X_1}(x)=\zeta \cdot x
		\]
		for all \(x{\,\in\,}(0,\infty)\). We then get that 
		\[
		\E_P\big[W_1\cdot e^{\delta_1(W_1)}\big] = \E_P[W_1]=\frac{\al}{\be}<\infty
		\]
		and 
		\[
		\E_P\big[X_1\cdot e^{\ga_r(X_1)}\big]=\int_0^\infty x\cdot r\cdot\zeta\cdot e^{- r\cdot\zeta\cdot x}\,\lambda(dx)=\frac{1}{r\cdot\zeta}<\infty
		\]
		for every \(r{\,\in\,}(0,\infty)\); hence $(\ga_r,\delta_\theta){\,\in\,}\hyperlink{fp1}{\F^1_P}{\,\times\,}\hyperlink{gp1}{\G^1_P}$ for all $(r,\theta){\,\in\,}(0,\infty){\,\times\,}\{1\}$. Moreover, as \(\theta{\,=\,1}\), we get
		\[
		c\cdot \E_P\big[W_1\cdot e^{\delta_\theta(W_1)}\big]\leq\E_P\big[X_1\cdot e^{\ga_r(X_1)}\big] \Leftrightarrow	r\leq \frac{\beta}{c\cdot\al\cdot\zeta}<1,
		\]
		implying
		\[
		(\ga_r,\delta_\theta)\in\hyperlink{rp}{\C_P}\Leftrightarrow (r,\theta)\in A_{X_1,W_1}=\big\{(r,\theta)\in (0,\infty)\times\{1\} :   r\leq \tfrac{\beta}{c\cdot\al\cdot\zeta}\big\}.
		\]
		Thus, for every $(r,\theta){\,\in\,}A_{X_1,W_1}$  we can define the probability measure
		\[
		Q^{(r)}(A):= Q^{(r,1)}(A)=\E_P\Big[\1_A\cdot r^{N_{t}}\cdot   e^{-\zeta\cdot(r-1)\cdot S_t}\Big]\,\,\text{ for all } A\in\H_t.
		\] 
		Condition~\eqref{ruinph} can be rewritten as
		\[
		\psi(u)=\E_{Q^{(r)}}\Big[r^{-N_{\tau_u}}\cdot e^{\zeta\cdot(r-1)\cdot S_{\tau_u}} \Big],
		\]
		while conditions~\eqref{csdph} and~\eqref{citdph} yield $Q^{(r)}_{X_1}{\,=\,}\textbf{Exp}(r\cdot\zeta)$ and $Q^{(r)}_{W_1}{\,=\,} P_{W_1}$, respectively. Recall that in the Sparre Andersen model with exponentially distributed claim sizes, the ruin probability is given by the formula
		\[
		\psi(u)=\Big(1-\frac{\rho}{\zeta}\Big) \cdot e^{-\rho\cdot u} 
		\]
		(cf., e.g.  Rolski et al.~\cite{rss}, Corollary 6.5.2), where \(\rho\) is the unique positive solution to the equation \(\ka(r){\,=\,}0\) (see Section~\ref{SALT}). 
 	\begin{table}[H] 
			\captionsetup{format=plain, font=footnotesize, labelfont=bf, justification=raggedright, margin=0cm}
			\caption{Comparison of the exact values for \(\psi(u)\) and the simulated values of \(\Psi_K(u)\) for \(P_{X_1}{\,=\,}{\bf Exp}(1)\), \(P_{W_1}{\,=\,}{\bf Ga}(2,1)\), $\eta{\,=\,}1/2$, \(\theta{\,=\,}1\), and \(r{\,=\,}0.9\cdot r_M(\theta)\).}
			\label{tableexgaph}
			\centering
			\sisetup{
				scientific-notation = true,
				table-format = 1.3e-2
			}
			\begin{tabular}{c | c|c|c|c}
				\hline 
				$u$ & {$\psi(u)$} & {$\Psi_K(u)$} &  \(\text{ARE}\%\) & \(\text{RSE}\%\)\\
				\Xhline{1pt}
				0 & \num{5.750e-01} & \num{5.751e-01} & 0.008\% & 0.255\% \\ 
				1 & \num{3.759e-01} & \num{3.767e-01} & 0.193\% & 0.326\% \\ 
				2 & \num{2.458e-01} & \num{2.460e-01} & 0.094\% & 0.390\% \\ 
				3 & \num{1.607e-01} & \num{1.614e-01}& 0.465\% & 0.550\% \\ 
				4 & \num{1.051e-01} & \num{1.058e-01} & 0.663\% & 0.529\% \\ 
				5 & \num{6.869e-02} & \num{6.923e-02} & 0.784\% & 0.628\% \\ 
				10 & \num{8.205e-03} & \num{8.202e-03} & 0.039\% & 0.963\% \\ 
				20 & \num{1.171e-04} & \num{1.174e-04} & 0.283\% & 2.108\% \\ 
				30 & \num{1.670e-06} & \num{1.664e-06} & 0.397\% & 4.480\% 
			\end{tabular}
		\end{table}
	
	Table~\ref{tableexgaph} indicates that \(\Psi_K(u)\) estimates align with the exact values of \(\psi(u)\) across the considered reserve levels, with AREs below \(0.8\%\) even for probabilities of order \(10^{-6}\). The RSEs remain small for moderate values of \(u\) but increase more noticeably as \(u\) increases, reaching \(4.480\%\) at \(u{\,=\,}30\).
	
	\end{sex}

\section{Conclusion and Discussion} \label{cr}
	
In this paper, given an initial probability measure \(P\) under which \(S\) is a \(P\)-CRP and \(\E_P[X_1]{\,<\,}\infty\), we  established a complete characterization of the class \(\hyperlink{rms}{\mathcal{R}_S}\) of ruin-inducing probability measures \(Q\) that are progressively equivalent to \(P\) and preserve the structure of \(S\). This characterization is expressed in terms of tilting pairs \((\ga,\delta){\,\in\,}\hyperlink{rp}{\C_P}\) (see Theorem~\ref{gform}). As a consequence, we obtained a flexible and unified representation of the infinite-time ruin probability \(\psi(u)\) that extends the classical adjustment coefficient framework and applies to both light- and heavy-tailed claim size and interarrival time distributions (see \eqref{ruin} in Theorem~\ref{gform}).  This work is not intended as an exhaustive treatment of the subject. It rather provides a conceptual and methodological foundation for further developments in modern risk theory and rare event simulation. Although our results provide a practical framework for estimating ruin probabilities in the Sparre Andersen risk model via IS, several fundamental questions about the efficiency of the simulation remain open.\smallskip

It is well known that, for every initial capital \( u{\,\in\,}\R_+ \), the estimator \( e^{-\rho \cdot(u - R_{\tau_u})} \), appearing in Section~\ref{SALT}, exhibits bounded relative error (see Asmussen \& Albrecher~\cite{asal}, Theorem~3.3). Since this estimator corresponds to a specific choice of \( (\ga,\delta) {\,\in\,} \hyperlink{rp}{\C_P} \), it is natural to ask whether other members of \( \hyperlink{rp}{\C_P} \) also satisfy this property.\smallskip

For light-tailed claim size distributions, the classical change of measures approach based on the Esscher transform (see Section~\ref{SALT}) yields the celebrated Cram\'{e}r-Lundberg approximation (cf., e.g., Schmidli~\cite{scm}, Sections~8.2 and~8.3). It would be of considerable interest to determine whether the Embrechts-Veraverbeke approximation for \( \psi(u) \) in the subexponential case (see Embrechts \& Veraverbeke~\cite{ev82}) can likewise be recovered through a suitable choice of \( (\gamma,\delta) {\,\in\,} \hyperlink{rp}{\C_P} \). Such a pair would constitute an appropriate candidate for constructing a log-efficient IS scheme for estimating \( \psi(u) \) in the heavy-tailed regime.\smallskip

In the examples presented in Sections~\ref{mmm} and~\ref{PH}, the simulation parameters were selected heuristically, which raises the question of identifying their optimal values. More generally, let \( Q {\,\in\,} \hyperlink{rms}{\mathcal{R}_S} \). By Theorem~\ref{gform}(i), there exists an essentially unique pair \( (\ga,\delta) {\,\in\,} \hyperlink{rp}{\C_P} \) satisfying condition~\eqref{ruin}. An application of the Monotone Convergence Theorem yields
\begin{align*}
	\mathbb{V}ar_Q\Big[e^{-S_{\tau_u}^{(\ga)}-T^{(\delta)}_{N_{\tau_u}}}\Big] 
	&=\E_P\Big[\1_{\{\tau_u<\infty\}}\cdot e^{-S_{\tau_u}^{(\ga)}-T^{(\delta)}_{N_{\tau_u}}}\Big]-\big(\psi(u)\big)^2\,\,\text{ for every } u\in\R_+.
\end{align*}
Since \(\psi(u)\) is independent of the pair \( (\ga,\delta) \), the above equality  leads to the optimization problem of determining whether there exists a pair 
\( (\ga^\ast,\delta^\ast) {\,\in\,} \hyperlink{rp}{\C_P} \) minimizing
\[
\E_P\Big[\1_{\{\tau_u<\infty\}}\cdot e^{-S_{\tau_u}^{(\ga)}-T^{(\delta)}_{N_{\tau_u}}}\Big].
\]
 
Another promising direction for future research is the formal embedding of existing rare-event simulation algorithms into the  class \( \hyperlink{rp}{\C_P} \). For instance, IS methods based on the concepts of \textit{delayed hazard rate twisting} and \textit{weighted delayed hazard rate twisting} (see Juneja~\&~Shahabuddin~\cite{jusha02} for details) lead to estimators with improved performance compared to the one discussed in Section~\ref{PH}. Identifying the functional forms corresponding to these known methods would not only provide a rigorous martingale foundation for their effectiveness, but also facilitate the derivation of new IS schemes.\smallskip

Finally, an interesting topic for future research is the extension of the current methodology to more general risk models that relax classical assumptions, such as settings where interarrival times and claim sizes are dependent or not identically distributed (see, e.g., Albrecher \& Boxma~\cite{albo04}, Cossette et~al.~\cite{colama15}, Lyberopoulos \& Macheras~\cite{lm3} and Kizinevič \& Šiaulys~\cite{kisi18}). In a forthcoming paper, we will explore in greater depth the connection between fair pricing in insurance (premium calculation principles) within an arbitrage-free market and the ruin probability of the corresponding reserve process in a Sparre Andersen risk model, building on the insights of Delbaen \& Haezendonck~\cite{dh} and Macheras \& Tzaninis~\cite{mt3}.

\end{document}